\documentclass[10pt]{amsart}
\parindent=0pt
\usepackage{amssymb, amsmath}
\usepackage{graphics,xcolor}
\usepackage{latexsym,amsmath,amssymb,amsthm,amsfonts}
\usepackage{amscd}
\usepackage[arrow, matrix, curve]{xy}

\theoremstyle{plain}
\newtheorem{thm}{Theorem}[section]
\newtheorem{theorem}[thm]{Theorem}
\newtheorem{lemma}[thm]{Lemma}
\newtheorem{corollary}[thm]{Corollary}
\newtheorem{proposition}[thm]{Proposition}

\theoremstyle{definition}

\newtheorem{remark}[thm]{Remark}

\newtheorem{definition}[thm]{Definition}

\newtheorem{example}[thm]{Example}
\newtheorem{conjecture}[thm]{Conjecture}
\numberwithin{equation}{thm}
\newtheorem{defrem}[thm]{Definition/Remark}

\newcommand{\A}{{\mathbb A}}

\newcommand{\C}{{\mathbb C}}

\newcommand{\G}{{\mathbb G}}
\renewcommand{\H}{{\mathbb H}}

\newcommand{\N}{{\mathbb N}}

\newcommand{\Q}{{\mathbb Q}}
\newcommand{\R}{{\mathbb R}}
\newcommand{\SSS}{{\mathbb S}}

\newcommand{\U}{{\mathbb U}}
\newcommand{\V}{{\mathbb V}}
\newcommand{\W}{{\mathbb W}}

\newcommand{\rk}{{\rm rank}}


\title[]{Special subvarieties in Mumford-Tate varieties}
\author{Abolfazl Mohajer}
\author{Stefan M{\"u}ller-Stach}
\author{Kang Zuo}
\address{Universit\"{a}t Mainz, Fachbereich 08, Institut f\"ur Mathematik, 55099 Mainz, Germany}
\email{mueller-stach@uni-mainz.de, mohajer@uni-mainz.de, zuok@uni-mainz.de}
\subjclass{14G35}
\keywords{Andr\'e-Oort conjecture, period domain, Shimura variety, Higgs bundle}
\thanks{Supported by SFB/TRR 45 der Deutschen Forschungsgemeinschaft}

\begin{document}

\begin{abstract}
Let $X=\Gamma \backslash D$ be a Mumford-Tate variety, i.e., a quotient of a Mumford-Tate domain $D=G(\R)/V$ by a discrete subgroup $\Gamma$. 
Mumford-Tate varieties are generalizations of Shimura varieties. 
We define the notion of a special subvariety $Y \subset X$ (of Shimura type), and formulate necessary criteria for $Y$ to be special. 
Our method consists in looking at finitely many compactified special curves $C_i$ in $Y$, and testing whether the inclusion $\bigcup_i C_i \subset Y$
satisfies certain properties. One of them is the so-called relative proportionality condition. In this paper, we give a new formulation of this numerical 
criterion in the case of Mumford-Tate varieties $X$. 
In this way, we give necessary and sufficient criteria for a subvariety $Y$ of $X$ to be a special subvariety of Shimura type in the sense of the Andr\'e-Oort conjecture. 
We discuss in detail the important case where $X=A_g$, the moduli space of principally polarized abelian varieties.
\end{abstract}

\maketitle

\section{Introduction} 

Griffiths domains \cite{cmp} are flag domains, i.e., quotients of the form $D=G(\R)/V$, where $G$ is a certain reductive algebraic group defined over $\mathbb{Q}$ and $V$ a compact stabilizer subgroup. 
Griffiths domains parametrize pure Hodge structures of given weight
and Hodge numbers. Any moduli space ${\mathcal M}$ of smooth, projective varieties induces, after a choice of cohomological degree and a base point, a period map 
\[
{\mathcal P}: {\mathcal M} \rightarrow \Gamma \backslash D,
\]
where $\Gamma$ is the monodromy group, i.e., the image of the fundamental group of ${\mathcal M}$ in $G(\R)$, 
a finitely generated, discrete subgroup. 

In general, the the period map ${\mathcal P}$ is not surjective, but has image contained in quotients 
of so-called Mumford-Tate domains by discrete subgroups, see \cite[Chap. 15]{cmp} or \cite{ggk}: 

\begin{theorem} \label{domaindecomp} After possibly replacing ${\mathcal M}$ by a finite, \'etale cover, the period map ${\mathcal P}$ factors as 
\[
{\mathcal P}: {\mathcal M} \longrightarrow \Gamma^{nc} \backslash D^{nc} \times \Gamma^{c} \backslash D^{c} \times D^{f},
\]
into a product of quotients of domains of non-compact, compact or flat (i.e., constant) type. Here $D^\bullet$ denotes a 
domain of the respective type. The composition with the third projection is constant. In addition, 
for each $x_1 \in \Gamma^{nc} \backslash D^{nc}$ and $x_3 \in D^{f}$, one has that 
${\rm Im}(\mathcal{P}) \cap (x_1 \times \Gamma^{c} \backslash D^{c}) \times x_3)$ is finite. 
\end{theorem}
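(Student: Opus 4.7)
The plan is to obtain the stated decomposition from the structure of the generic Mumford-Tate group $M$ of the polarized $\Z$-variation of Hodge structure on $\sM$, transfer it to the flag domain $D$, and then invoke rigidity of variations of Hodge structure to control the compact and flat factors.

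First I would replace $\sM$ by a connected finite \'etale cover so that $\Gamma$ is torsion-free and the algebraic monodromy group is connected and contained in $M(\Q)$. Here $M$ is the generic Mumford-Tate group, which exists by Deligne's theorem as the smallest $\Q$-algebraic subgroup of $\Gl$ through which a very general Hodge structure in the family factors; by a theorem of Andr\'e--Deligne, the identity component of the algebraic monodromy lies in the derived group of $M$. Next, I would decompose $M^{\text{ad}}$ over $\Q$ into its simple factors and group them according to the behaviour of the Hodge circle $h : \SSS^1 \to M_\R$: factors whose real form is non-compact and on which $h$ is non-trivial (collected into $M^{nc}$), factors whose real form is compact (collected into $M^c$), and factors on which $h$ acts trivially (collected into $M^f$). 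The product $M^{\text{ad}} = M^{nc} \times M^c \times M^f$ induces a corresponding product decomposition of flag domains $D = D(M^{nc}) \times D(M^c) \times D(M^f)$, and the period map factors through the quotient by the projections of $\Gamma$ onto the first two adjoint factors. A further finite \'etale cover kills the image of $\Gamma$ in $M^f$, yielding the shape of the target asserted in the theorem.

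The constancy of the third projection is immediate from the definition of flat type: since $h$ is trivial on $M^f$, the Hodge filtration is constant in the flat direction, so the period map has constant image in $D(M^f)$.

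The main obstacle is the finiteness of $\mathrm{Im}(\sP) \cap (\{x_1\} \times \Gamma^c \backslash D(M^c) \times \{x_3\})$. Here I would appeal to the rigidity of horizontal holomorphic maps into compact-type Mumford-Tate domains: on $D(M^c)$ the Griffiths horizontal distribution carries a Hodge-theoretic positivity, and $\Gamma^c \backslash D(M^c)$ (a compact flag quotient) is Kobayashi-hyperbolic along horizontal directions. Hence a horizontal holomorphic map from a fibre of $\sM \to \Gamma^{nc} \backslash D(M^{nc})$ must have discrete image in $\Gamma^c \backslash D(M^c)$; combined with the finite topological type of the fibres of $\sM$ this gives finiteness. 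The delicate points are pinning down the precise rigidity theorem sufficient in the Mumford-Tate generality considered (rather than the Shimura-variety case, where the argument reduces to the classical fact that compact-type Shimura data give constant period maps) and handling the fact that $\Gamma$ need not split as a direct product across the decomposition of $M^{\text{ad}}$ — which is why it is essential to work with the projections $\Gamma \to \Gamma^{nc}$ and $\Gamma \to \Gamma^c$ and pass to an \'etale cover before asserting the product form of the target.
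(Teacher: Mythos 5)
The paper does not actually supply a proof of this theorem: it is stated as a quotation of a result from \cite[Chap.\ 15]{cmp} and \cite{ggk}, so there is no ``paper's own proof'' to compare against. Evaluating your proposal on its own merits:

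Your preliminary steps are sound. Passing to a finite \'etale cover so that $\Gamma$ is torsion-free and the algebraic monodromy is connected, invoking Andr\'e--Deligne to place the monodromy inside $M^{\mathrm{der}}$, decomposing $M^{\mathrm{ad}}$ over $\Q$ into simple factors grouped by the behaviour of the Hodge circle, and factoring the period map through the corresponding product of Mumford--Tate domains is the correct skeleton, and is indeed how \cite{cmp,ggk} set things up. The constancy of the flat projection from triviality of $h|_{M^f}$ is also fine.

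However, your treatment of the finiteness assertion contains a genuine gap, and the tool you reach for is the wrong one. When $M^c(\R)$ is compact, the domain $D(M^c)=M^c(\R)/V^c$ is a full generalized flag variety (it coincides with its compact dual), and $\Gamma^c$ is a \emph{finite} group, so $\Gamma^c\backslash D(M^c)$ is a finite quotient of a rational homogeneous projective variety. Such spaces are rationally connected and are covered by rational curves, many of which are \emph{horizontal} (Schubert curves in the directions of the simple roots appearing in $\mathfrak{g}^{-1,1}$). Consequently there is no Kobayashi hyperbolicity of $\Gamma^c\backslash D(M^c)$, horizontal or otherwise, and the discreteness argument you sketch cannot go through. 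The hyperbolicity results you seem to have in mind (Griffiths--Robles--Toledo and their kin) concern flag domains $G(\R)/V$ with $G(\R)$ \emph{non-compact}, precisely the opposite of the present situation. What actually produces the finiteness in the sources is Hodge-theoretic rigidity, not hyperbolicity: on a fibre of the first projection the ``essential'' part of the Hodge structure is constant, the residual monodromy lies in the compact real form $M^c(\R)$ and is therefore finite (hence trivial after the \'etale cover), and one then invokes Schmid/Deligne's theorem of the fixed part together with rigidity of polarized variations of Hodge structure to conclude that the compact component of the period map is constant on each connected component of that fibre. Finiteness of the number of components of the fibre then gives finiteness of the image. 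Your write-up gestures toward rigidity but substitutes an incorrect hyperbolicity claim for the actual input; that substitution would have to be removed and replaced by the fixed-part/rigidity argument for the proof to stand.
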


Recall that a domain is by definition of non-compact type (resp. compact type, resp. flat type) if its
universal cover is a product of non-compact irreducible spaces (resp. product of compact irreducible spaces, resp. is a euclidean space).
Theorem~\ref{domaindecomp} asserts that the ''non-compact part'' of the period map is the essential one. 
The (derived) Mumford-Tate group of the Hodge structure of a general element in ${\mathcal M}$ contains the algebraic monodromy group,
i.e., the $\Q$-Zariski closure of the topological monodromy group, as a normal subgroup by a theorem of Y. Andr\'e \cite[Prop. 15.8.5]{cmp}. 

{\bf In the rest of this paper, we will assume that $G$ is of non-compact type, $\Q$-simple and adjoint.} It is not difficult to reduce to this case. 
Only in rare cases, $D$ itself is Hermitian symmetric \cite{cmp}. 
In these cases, $\Gamma \backslash D$ is a connected component of a Shimura variety under some arithmetic condition on $\Gamma$ \cite{deligne,moonen}. An important example 
is the moduli space $A_g=\Gamma \backslash \H_g$ of principally polarized abelian varieties of dimension $g$ with some level structure induced by $\Gamma$. Here
$\mathbb{H}_g$ denotes the Siegel upper half space of genus $g$. 
Shimura varieties contain distinguished subvarieties which are called special subvarieties. The zero-dimensional special subvarieties are the CM points, i.e., the 
points corresponding to Hodge structures with commutative Mumford-Tate group. Positive dimensional special subvarieties are more difficult to understand. 
However, the Andr\'e-Oort conjecture claims that special subvarieties of Shimura varieties are precisely the loci which are the Zariski closures of sets of CM points. 
This conjecture has recently attracted a lot of interest, see the work of Edixhoven, Klingler, Pila, Ullmo, Tsimerman, Yafaev and others 
\cite{edixhoven-yafaev,klingler-yafaev,pila,ullmo-yafaev,tsimerman}. In 2015, based on the above works, Tsimerman \cite{tsimerman} completed the proof of the Andr\'e-Oort conjecture for $A_g$ using 
an averaged version of a conjecture of Colmez proved by Yuan and Zhang \cite{zhang} and also independently by Andreatta-Goren-Howard-Madapusi in \cite{AGHM}.

Our aim is to give sufficient and effective Hodge theoretic criteria for a subvariety of $X=\Gamma \backslash D$ to be a special subvariety.  

In \cite{mvz09} and \cite{mz11}, we have studied special subvarieties in Shimura varieties of unitary or orthogonal type. 
Our method consisted of characterizing special subvarieties by a relative proportionality principle. The main goal of the present work is to generalize 
this principle to quotients of Mumford-Tate domains. 

\subsection{Results in the case $X=A_g$} \label{resultsAg}

For the reader's convenience, we first study the case where $X=A_g$. 
Let $A_g=\Gamma \backslash \H_g$ be a smooth model, i.e., we require that $\Gamma$ is torsion-free. Recall that
by $\mathbb{H}_g$ we denote the Siegel upper half space of genus $g$. 
We choose a smooth toroidal compactification $\overline{A}_g$ as constructed by Mumford et al. \cite[chap. III]{amrt}, 
such that the boundary $S \subset \overline A_g$ is a divisor with normal crossings. 

We consider a smooth projective subvariety $Y \subset \overline A_g$ meeting $S$ transversely and define $Y^0:=Y \cap A_g$.  Our results are valid
for any compactification $\overline{A}_g$ satisfying these conditions.
Throughout this paper we denote subvarieties contained in the locally symmetric part $A_g$ of $\overline A_g$ with a superscript $0$.

Such a subvariety $Y^0$ is called special, if it is an irreducible component of a Hecke translate of 
the image of some morphism $Sh_K(G,X)\rightarrow A_g=Sh_{K(N)}(GSp(2g),\H_g^{\pm})$, 
defined by an inclusion of a Shimura subdatum $(G,X)\subset(GSp(2g),\H_g^{\pm})$ 
together with some compact open subgroup $K\subset G(\mathbb{A}_f)$ such that $K\subset K(N)$. 
See Section~\ref{shimura} for details about Shimura varieties and special subvarieties.

We look for necessary and sufficient effective criteria, such that $Y^0$ is a special subvariety with minimal dimension
containing a union $\bigcup_{i \in I} C_i^0$ of finitely many special curves $C_i$. 
Already in our previous works \cite{mvz09} and \cite{mz11} we have found a necessary condition for $Y^0$ to be special, provided a 
compactified special curve $C \subset \overline{A}_g$ (i.e., compactification of a special curve $C^0\subset Y^0$) is contained in $Y$: 

\begin{definition}[Relative Proportionality Condition (RPC)]{~}\\
Let $C \subset Y \subset \overline{A}_g$ be a compactified irreducible special curve with logarithmic 
normal bundle $N_{C/Y}$, and $3$-step Harder-Narasimhan filtration 
$0 \subset N^0_{C/Y} \subset N^1_{C/Y} \subset N^2_{C/Y}=N_{C/Y}$ (both notions are explained in Section~\ref{relprop}). 
Then one has the relative proportionality inequality 
\begin{equation*} 
\deg N_{C/Y} \leq \frac{\rk (N^1_{C/Y})+\rk (N^0_{C/Y})}{2} \cdot \deg T_C(-\log S_C).
\end{equation*}
If $C$ and $Y$ are special subvarieties, then equality holds.  
\end{definition}

For curves $C$ on Hilbert modular surfaces or Picard modular surfaces, this condition is only a simple numerical criterion involving 
intersection numbers, see \cite{mvz09} and \cite{mz11}. 

Suppose we are given a finite number of compactified special curves $C_i$ in $\overline{A}_g$, 
contained in some irreducible subvariety $Y$ of dimension $\dim(Y) \ge 2$. We assume for simplicity that $Y$ and all $C_i$ 
intersect the boundary $S$ of $A_g$ transversely.
Fix a base point $y \in Y^0 \subset A_g$ contained in the union of all $C_i$ and assume for simplicity that the union $\bigcup_{i \in I} C_i^0$ is connected. 
Let $\mathbb{V}$ be the local system of weight 1 polarized $\mathbb{Q}$-Hodge structures over $A_g$. 
More precisely, choose a level $N\geq 3$ structure $A_g^{[N]}$ on $A_g$. Denote by $f: U \to A_g$ the universal family of abelian varieties, 
then $\V=R^1f_*\Q$ is the local system attached to it. So over each point $y \in A_g$, $\V_y$ is the associated polarized $\mathbb{Q}$-Hodge structure
$H^1(A_y,\Q)$ with $A_y$ being the abelian variety corresponding to $y$.
Let $(E,\theta)=(E^{1,0}\oplus E^{0,1}, \theta)$  be the Higgs bundle associated with $\V_{\C}$. The thickening of the Higgs field is
the pullback of the Higgs bundle on $Y$ via $\varphi:C\to Y$:
\[\theta_{C/Y}:= \varphi^{*}\theta:E^{1,0}\to E^{0,1}\otimes \varphi^{*} \Omega^1_Y(\log S_Y).\]
we can also consider the thickening in a point $y\in Y$:
\[\theta_{y\in Y}:E^{1,0}_y\to E^{0,1}_y\otimes \varphi^{*} \Omega^1_Y(\log S_Y)_y.\]
Consider the $k$-fold tensor product $(E,\theta)^{\otimes k}$. It decomposes as a direct sum
$E^{\otimes k}=\displaystyle \sum_{m+n=k} E^{m,n}$, where $E^{m,n}=\bigoplus E^{m_1,n_1}\otimes \cdots \otimes E^{m_k,n_k}$ and the sum ranges over all $(m_i, n_i)$ with $m_i+n_i=1, \displaystyle \sum_{i=1}^{k}m_i=m, \sum_{i=1}^{k}n_i=n$.
The Higgs field, which we continue to denote by $\theta$, decomposes as $\theta:E^{m,n}\to E^{m-1,n+1}\otimes \Omega^1_Y(\log S_Y)$
with 
\[\displaystyle E^{m_1,n_1}\otimes \cdots \otimes E^{m_k,n_k}\xrightarrow{\theta} \bigoplus_{i=1}^{k}E^{m_1,n_1}\otimes \cdots\otimes E^{m_i-1,n_i+1} \otimes\cdots E^{m_k,n_k}\otimes \Omega^1_Y(\log S_Y)\]
satisfying the Leibniz rule. 
In particular, given a base point $y\in \varphi(C) \subset Y$, we get the map
\[\theta_{y\in Y}:E^{p,p}_y\to E^{p-1,p+1}_y\otimes \Omega^1_Y(\log S_Y)_y.\] 
Note that the complex vector space $E^{p,p}_y$ does not have any natural $\Q$-structure, 
however since in the fibers $E^{p,p}_y\subset \V^{\otimes k}_y\otimes \C$, we define

\begin{definition} \label{Hy}
Let
\[W_{y \in Y}:=\{ v \in E^{p,p}_y\cap \V^{\otimes k}_{y,\Q} \mid \theta_{y \in Y}(v)=0\}.\]

i.e., the Hodge tensors that are killed by the (infinitesimal) Higgs field in $y\in Y$.  Note that if $k$ is not even, 
then we require that $W_{y \in Y}$ is trivial. The tensors in $W_{y \in Y}$ are called \emph{infinitesimally parallel Hodge tensors}. 
We define the group $H_y$ to be the largest $\mathbb{Q}$-algebraic group fixing the vectors in all $W_{y \in Y}$ ( for all $p$).
\end{definition}

The spaces $W_{y \in Y}$ (more generally $W_{y \in Y}$ for a finitely generated sublocal system $\mathbb{W}\subset \mathbb{V}^{\otimes}$, see Definition~\ref{kernels}) 
and the group $H_y$ will play a crucial role in the sequel, see \S 7.\\

For the following definition, note that as remarked above, by the construction of the Higgs bundle associated to a local system, it holds that in the fibers $E^{m,n}_y\subset \V^{\otimes k}_y$ and so
we can do the parallel transport using the connection fiberwise (but the connection associated to the local system does not descend
to the graded bundle $E^{m,n}$ in general). Note that it is not in general true that the property of being a $(p,p)$-class
is preserved under the parallel transport. 
\begin{defrem} \label{inclusions}
Let 
\[(E^{p,p}_y)_{par}=\{v\in E^{p,p}_y\cap \V^{\otimes k}_{y,\Q}|\text{any parallel transport of }v \text{ from } y \text{ to }  y^{\prime} \text{ lies in }
E^{p,p}_{y^{\prime}}\cap \V^{\otimes k}_{y^{\prime},\Q}\}.\]
Here $y^{\prime}\in Y$ varies in $Y$ and we say ``parallel transport of $v$ from $y$ to $y^{\prime}$'', where we mean more precisely 
"parallel transport of $v$ along any path from $y$ to $y^{\prime}$``. By parallel transporting one also gets $(E^{p,p}_y)_{par}\simeq (E^{p,p}_{y^{\prime}})_{par}$.

We define
\[MT(\mathbb{V})_y=\{g\in GL(E_y)|g \text{ stabilizes } v\in (E_y^{p,p})_{par} \text{ for all } p\in \N\}.\]
Note that $(E^{p,p}_y)_{par}$ is associated to a polarized sublocal system and since it consists of Hodge tensors (i.e., $\Q$-tensors of type $(p,p)$),
its polarization is definite and so it is unitary and hence $v\in (E^{p,p}_y)_{par}$ is killed by the Higgs field $\theta_{y \in Y}$. Hence, by the above defintion of the group $H_y$, we have the inclusion
\[H_y\subset MT(\mathbb{V})_y.\]
On the other hand, if $MT(\mathbb{V}_y)$ is the Mumford-Tate group fixing all Hodge tensors in $\V_y$, then it trivially holds that $MT(\mathbb{V}_y)\subset H_y$. 
So in general we have the inclusions
\[MT(\mathbb{V}_y)\subset H_y\subset MT(\mathbb{V})_y.\]
Note that by parallel transport we have an isomorphism $MT(\mathbb{V})_y\simeq MT(\mathbb{V})_{y^{\prime}}$ (but in general the isomorphism $MT(\mathbb{V}_y)\simeq MT(\mathbb{V}_{y^{\prime}})$
is not true).   
\end{defrem}

Now we are ready to state our first result:  

\begin{theorem}\label{Theorem1} Let $Y^0$ be a smooth, algebraic subvariety of $A_g$ such that
$Y^0$ has unipotent monodromies at infinity. Let $Y$ be a smooth compactification of $Y^0$ as above such that $Y$ intersects the boundary $S$ of $\overline{A}_g$ transversely.
Suppose $Y^0$ contains finitely many special curves $C^{0}_i$ such that the compactification $C_i$ of $C^{0}_i$ is a special curve in $Y$ and 
that $\bigcup_{i \in I} C_i^0$ is connected. Choose a base point $y\in \bigcup_{i \in I} C_i^0$. 
Assume the following:  \\ 
(BIG) The $\Q$-Zariski closure in $G={\rm Sp}(2g)$ of the monodromy representation of $\pi_1(\bigcup_{i \in I} C_i^0,y)$ equals the $\Q$-Zariski closure of the 
representation of $\pi_1(Y^0,y)$. \\
(LIE) If $H=H_y$ is the $\mathbb{Q}$-algebraic group in Definition~\ref{Hy} and $H(\mathbb{R})/K$ is the associated period domain, then one has $\dim H(\mathbb{R})/K \le \dim Y$. \\
(RPC) All compactified special curves $C_i$ satisfy relative proportionality.\\
Then, $Y^0$ is a special subvariety of $A_g$. 
\end{theorem}

In addition, the proof of the theorem implies that $K$ is a maximal compact stabilizer group, that $H$ is of Hermitian type (i.e., $H/K$ is a Hermitian symmetric space), and in the Hodge decomposition 
$\mathfrak{h}_\C=\mathfrak{h}^{-1,1} \oplus \mathfrak{h}^{0,0} \oplus \mathfrak{h}^{1,-1}$ of the real Lie algebra $\mathfrak{h}={\rm Lie} \, H(\R)$, one has 
$\mathfrak{h}^{-1,1}=T_{Y^0,y}$ for the holomorphic tangent space of $Y^0$ at $y$. In particular, the group $H_y$ does not depend on the base point $y$ in a crucial way.

More can be said about the group $H=H_y:$  In fact, in general the holomorphic tangent space
$T_{Y^0,y}$ is a subspace of the holomorphic tangent space of $H/K$ at $y$ and  
$H(\R)/K \subset \H_g$ is the smallest Mumford-Tate subdomain which contains $y$ and such that the holomorphic tangent space of $H(\R)/K$ at $y$ contains 
$T_{Y^0,y}$, see Proposition~\ref{Mon+Lie}.  Therefore, one always has $\dim H(\R)/K\geq \dim Y^0$, and condition (LIE) implies that $\dim H(\R)/K=\dim Y^0$. 

Theorem~\ref{Theorem1} generalizes previous work in \cite{mvz09} and \cite{mz11}, which was restricted to special subvarieties in
unitary or orthogonal Shimura varieties, hence the case of rank $\le 2$.
There are explicit examples of connected cycles $\cup_i C_i^0$ of special curves $C_i^0$ in $A_g$ for $g \ge 2$, 
for which the minimal enveloping special subvariety of $\cup_i C_i^0$ is $A_g$ but not smaller. For example two so called Mumford curves (see \cite{VZ04}, section 5) in $A_4$ intersecting at one point.
This shows that condition (LIE) is necessary. 
We saw already above that (RPC) is also necessary. Condition (BIG) is probably not a necessary condition. 
All three conditions are not independent, but their relations are not fully understood. 
In the course of the proof, we will see that condition (BIG) together with (RPC) implies that the monodromy group $\Gamma$ (i.e. the $\Q$-Zariski closure $\overline{\rho (\pi_1(Y^0,y))}$)
is contained in the group $H=H_y$ defined above. We therefore formulate a condition as follows 
\[
\textrm{(Mon)} \quad \quad \Gamma \subset H_y
\]

See the last section of this introduction for a strategy of the proof of Theorem~\ref{Theorem1}.

\subsection{Results in the case of a Mumford-Tate variety $X=\Gamma \backslash D$} \label{generalresults}

Now we turn to the general case. 
As far as we know, there is no good notion of Hecke operators on Mumford-Tate domains $D=G(\R)/V$. In addition, 
there are no good compactifications of a Mumford-Tate variety $X=\Gamma \backslash D$  known in these cases in general \cite{grt}.  

Therefore, to avoid these two difficulties, by a special curve in $X$ we will denote an \'etale morphism
\[
\varphi^0: C^0 \longrightarrow X                                                                    
\]
from a Shimura curve $C^0$, which is induced from a morphism of algebraic groups $G' \to G$ defined over $\Q$, such that a certain Shimura datum for $G'$ defines $C^0$.
Assume also that we are given a quasi-projective variety $Y^0 \subset X$ containing the image of $\varphi^0$ and with a NC smooth compactification $Y$ and let $C$ be a compactification of $C^0$.
Denote by $S_Y=Y \setminus Y^0$ the boundary divisor, and by $S_C=C \setminus C^0$, so that $S_C$ is the pullback of $S_Y$ to $C$, and 
$\varphi^0$ extends to a finite map $\varphi: C \to Y$.

In Section~\ref{relprop2}, we show that there is a filtration
\[
N^{0}_{C/Y}\subset N^{1}_{C/Y}\subset \cdots \subset
N^{s}_{C/Y}=N_{C/Y}
\]
on the logarithmic normal bundle $N_{C/Y}$, induced by the Harder-Narasimhan filtration on $N_{C/X}$. The logarithmic normal bundle
$N_{C/Y}$ is defined by the exact sequence
\[
0 \to T_C(- \log S_C) \to \varphi^* T_Y(- \log S_Y) \to N_{C/Y} \to 0.
\]
The relative proportionality condition can be stated as: 

\begin{definition}[Relative Proportionality Condition (RPC)]{~}\\
The curve $\varphi:C\to Y$ satisfies the
\emph{relative proportionality condition} (RPC) if the slope inequalities
\[
\mu(N^{i}_{C/Y}/ N^{i-1}_{C/Y})\leq \mu(N^{i}_{C/X}/N^{i-1}_{C/X}), \text{ for  } i=0,...,s
\]
are equalities. The sheaves $N^{i}_{C/X}$ are properly defined in Section~\ref{relprop2}. 
The integer $s$ depends on $C$ and $X$.
Summing up these inequalities, yield the relative proportionality inequality 
\begin{equation*} 
\deg N_{C/Y} \leq r(C,Y,X) \cdot \deg T_C(-\log S_C),
\end{equation*}
where $r(C,Y,X) \in \Q$ is a rational number depending on $C$, $Y$ and $X$, and hence on $G$.
If $C$ and $Y$ are special subvarieties, then equality holds. 
\end{definition}
Let $\mathfrak{g}={\rm Lie}(G)$ be the Lie algebra of $G$. We have a weight zero Hodge structure on ${\mathfrak g}$, 
\[
{\mathfrak g}=\bigoplus_p {\mathfrak g}^{-p,p}. 
\] 

\begin{definition} \label {horizontal} We denote by $T^h_X$ the holomorphic, horizontal tangent bundle to $X$ \cite[Sec. 12.5]{cmp}. That is, 
$T^h_X$ is the homogenous bundle on $X$ associated to ${\mathfrak g}^{-1,1}$. 
\end{definition}
Now we prove the analogue of Theorem~\ref{Theorem1} for Mumford-Tate varieties. We will assume that 
$Y^0$ is a \emph{horizontal subvariety} of $X$, i.e., that $T_{Y^0}$ is contained in the horizontal tangent bundle $T^h_X$. For the following theorem, we 
need also to generalize the notion of special subvariety of Shimura type of a general Mumford-Tate variety $X$. This will be done in section 5. 
Let $\mathbb{V}$ be a local system of polarized $\Q$-Hodge structures over $X$. So over each point $y \in X$, $\V_y$ is a polarized $\Q$-Hodge structure 
of some given weight. Choose any finitely generated, sublocal system $\W \subset \V^\otimes$ of even weight $2p$ and defined over $\Q$, 
where $\V^\otimes$ is the full tensor algebra generated by tensor powers of $\V$ and its dual. 
We denote the fiber of $\W_\Q$ over $y$ by $W_{y,\Q}$.  
Let $(E,\vartheta)$ be the Higgs bundle corresponding to $\W_\C$.

Assume now that $\varphi: C \to Y$ compactifies $C^0 \longrightarrow Y^0 \longrightarrow X$ with $C^0$ a special curve in $X$.  
If $\varphi:C\to Y$ satisfies (RPC), then we have a decomposition:
\[
N_{C/Y}=N^{0}_{C/Y}\oplus N^{1}_{C/Y}/N^{0}_{C/Y}\oplus \cdots \oplus
N^{i}_{C/Y}/N^{i-1}_{C/Y}\oplus \cdots \oplus
N^{s}_{C/Y}/N^{s-1}_{C/Y}.
\]

Now let 
\[
\theta_{y \in Y}:= E_y^{p,p} \to E_y^{p-1,p+1} \otimes \Omega^1_Y(\log S_Y)|_y 
\]
be the thickening of the Higgs field along $C$, as explained in section~\ref{generalresults}, with splitting
\[
E_y^{p-1,p+1} \otimes \Omega^1_Y(\log S_Y)|_y  \cong E_y^{p-1,p+1} \otimes 
\left(\Omega^1_C(\log S_C)|_y \oplus N_{C/Y}^\vee|_y \right).
\]
Although the complex vector space $E^{p,p}_y$ does not have any natural $\Q$-structure, we define as in the Definition~\ref{Hy}, 

\begin{definition} \label{kernels} Under these assumptions, we define a complex vector space in analogy with Definition~\ref{Hy}
\[
W_{y \in Y}:= \{ t \in E_y^{p,p}\cap \W_{y,\Q} \mid \theta_{y \in Y}(t)=0 \}.
\]
As in the case of $A_g$, the tensors in $W_{y \in Y}$ are called infinitesimally parallel Hodge tensors. 
We define the group $H_y$ to be the largest $\mathbb{Q}$-algebraic group fixing the vectors in all $W_{y \in Y}$.
\end{definition}

With the above definition, the condition (Mon) in the general case is fomulated exactly as in the previous section, namely the monodromy group $\Gamma$ 
(i.e. the $\Q$-Zariski closure $\overline{\rho (\pi_1(Y^0,y))}$) is contained in the group $H=H_y$ defined above.
\[
\textrm{(Mon)} \quad \quad \Gamma \subset H_y
\]

\begin{theorem}\label{Theorem2} Let $X=\Gamma \backslash D$ be a Mumford-Tate variety associated 
to the Mumford-Tate group $G$. Let $Y^0$ be a smooth, horizontal algebraic subvariety of $X$ that
 has unipotent monodromies at infinity. Moreover let $Y$ be a NC smooth compactification of $Y^0$. Suppose that there exists a finite collection of special curves $\varphi_i^0: C_i^0 \longrightarrow Y^0$
  such that $\bigcup_{i \in I} C_i^0$ is connected with compactifications $\varphi_i: C_i \longrightarrow Y$ as above and NC divisors $S_{C_i}=C_i\setminus C_i^0$. 
  Choose a base point $y\in \bigcup_{i \in I} C_i^0$. Assume the following: \\
(BIG) The $\Q$-Zariski closure in the Mumford-Tate group $G$ of the monodromy representation of 
$\pi_1(\bigcup_{i \in I} C_i^0,y)$ equals the $\Q$-Zariski closure of the representation of $\pi_1(Y^0,y)$. \\
(LIE) If $H=H_y$ is the $\Q$-algebraic group of Definition~\ref{kernels} and $K$ a compact subgroup such that $H(\R)/K \subset D$ is the period domain associated to $H$, then one has $\dim H(\R)/K \le \dim Y$.\\
(RPC) All compactified special curves $C_i$ satisfy relative proportionality.\\
Then, $Y^0$ is a special subvariety of $X$ of Shimura type (see Definition~\ref{specialdef2}).   
\end{theorem}

In addition, as in the case of $A_g$, the proof of the theorem implies that the group $H$ essentially does not depend on $y$, 
$K$ is a maximal compact stabilizer group, that $H$ is of Hermitian type, 
and in the Hodge decomposition $\mathfrak{h}_\C=\mathfrak{h}^{-1,1} \oplus \mathfrak{h}^{0,0} \oplus \mathfrak{h}^{1,-1}$ 
of the real Lie algebra $\mathfrak{h}={\rm Lie} \, H(\R)$, one has 
$\mathfrak{h}^{-1,1}=T_{Y^0,y}$ for the holomorphic tangent space of $Y^0$ at $y$. See Proposition~\ref{Mon+Lie} and the paragraph following this proposition for the above claims. 
 
The (RPC) condition implies that the above filtration is in fact the Harder-Narasimhan filtration on $N_{C/Y}$. 

\subsection*{Strategy of the proof}

The proof of both theorems is based on the following observations: 

\begin{proposition} \label{Mon+Lie} Let $X=\Gamma \backslash D$ be a Mumford-Tate variety associated 
to the Mumford-Tate group $G$. Let $Y^0$ be a smooth, horizontal algebraic subvariety of $X$ such that
$Y^0$ has unipotent monodromies at infinity. Assume the conditions (Mon) and (LIE). Then $Y^0$ is special. 
\end{proposition}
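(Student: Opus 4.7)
The plan is to combine the assumptions (M-T) and (LIE) with the horizontality hypothesis to force $H$ to be of Hermitian type and to force $Y^0$ to saturate a Shimura subdomain of $D$.

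First, I unpack (M-T). Since globally parallel Hodge tensors over $Y^0$ are a fortiori infinitesimally parallel at $y$, the inclusion $H_y \subset MT(\V_{Y^0})_y$ holds automatically; (M-T) asserts that this is an equality up to connected components and derived subgroup. By the standard reduction of a polarized variation of Hodge structure to its generic Mumford-Tate group (Deligne--André), the variation $\V_{Y^0}$ lifts to a holomorphic period map from the universal cover $\widetilde{Y^0}$ into the Mumford-Tate subdomain $D_H := H(\R)/K \hookrightarrow D$, where $K$ is the compact isotropy of the Hodge filtration at $y$. This factorization descends to an embedding $Y^0 \hookrightarrow \Gamma_H \backslash D_H \hookrightarrow X$, where $\Gamma_H \subset H(\Q)$ is arithmetic and commensurable with $\Gamma \cap H(\Q)$.

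Second, I exploit horizontality. Since $T_{Y^0}$ lies in the horizontal tangent bundle of $X$, its image in $T_{D_H,y}$ sits inside the $(-1,1)$-piece $\mathfrak{h}^{-1,1}$ of the Hodge decomposition $\mathfrak{h}_\C = \bigoplus_p \mathfrak{h}^{-p,p}$ of the Lie algebra of $H(\R)$. Therefore
\[
\dim Y^0 \;\le\; \dim_\C \mathfrak{h}^{-1,1} \;\le\; \dim_\C D_H.
\]
Condition (LIE) collapses all three terms to a common value. In particular $\mathfrak{h}_\C$ is concentrated in Hodge types $(-1,1)$, $(0,0)$, $(1,-1)$, which is precisely the criterion that the infinitesimal period relation on $D_H$ is trivial; equivalently, $H$ is of Hermitian type and $D_H$ is a Hermitian symmetric domain. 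Consequently $(H,D_H)$ is a Shimura subdatum of the Mumford-Tate datum $(G,D)$, so $\Gamma_H \backslash D_H$ is a connected component of a Shimura subvariety sitting inside $X$. Because $Y^0 \subset \Gamma_H \backslash D_H$ and $\dim Y^0 = \dim D_H$, the subvariety $Y^0$ fills out a component of $\Gamma_H \backslash D_H$, hence coincides with this special subvariety of Shimura type.

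The main obstacle I anticipate is the second step: rigorously confirming that horizontality together with dimensional saturation forces $\mathfrak{h}^{-p,p} = 0$ for all $|p| \ge 2$. The key input is that the image of $T_{Y^0,y}$ in $\mathfrak{h}^{-1,1}$, together with its complex conjugate in $\mathfrak{h}^{1,-1}$, must generate $\mathfrak{h}_\C$ as a Lie algebra, which follows from (M-T) combined with the $\Q$-simplicity hypothesis on the ambient group $G$; propagating the bracket relation $[\mathfrak{h}^{a,-a},\mathfrak{h}^{b,-b}] \subset \mathfrak{h}^{a+b,-(a+b)}$ and the equality $\dim T_{Y^0,y} = \dim \mathfrak{h}^{-1,1}$ then kills all higher-weight components. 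A secondary technicality is checking that $\Gamma_H$ can indeed be taken arithmetic with the desired commensurability, which may require passage to a finite étale cover consistent with the standing hypothesis that $Y^0$ has unipotent monodromies at infinity.
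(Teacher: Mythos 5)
Your proposal is correct and follows essentially the same route as the paper's (very terse) proof: (M-T) factors the period map through $Z^0 = \Gamma_H \backslash D_H$, (LIE) gives $\dim Y^0 = \dim D_H$ so $Y^0 = Z^0$, and horizontality forces $D_H$ to be Hermitian symmetric. Your elaboration of the last step is a useful addition that the paper states without proof, but the obstacle you anticipate is lighter than you think: once $T_{Y^0,y} \subset \mathfrak{h}^{-1,1} \subset \bigoplus_{p\ge 1}\mathfrak{h}^{-p,p} \cong T_{D_H,y}$ and (LIE) makes the outer dimensions equal, the inner inclusion is forced to be an equality, so $\mathfrak{h}^{-p,p}=0$ for $p\ge 2$ immediately; no bracket-generation argument or appeal to $\Q$-simplicity is needed.
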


\begin{proof} Let $\Gamma$ be the image of $\pi_1(Y^0,y)$ under $\rho$ in $G$. The map of universal covers $\widetilde{Y^0}\to G(\R)/V$ factors through $\widetilde{Y^0}\to H(\R)^{+}/K\hookrightarrow G(\R)/V$
and using condition (Mon), the period map may be viewed as a map
\[
Y^0 {\buildrel \mathcal{P} \over \hookrightarrow} Z^0 =\Gamma \backslash H(\R)^+/K.
\]
By condition (LIE), one has $Y^0=Z^0$ for dimension reasons. 
Since $Y^0$ is horizontal by assumption, it follows that $Y^0$ is special. 
\end{proof}

Note that the tangent space of the image of the above period map $\mathcal{P}$ lies in the horizontal subspace of the tangent space of $H(\mathbb{R})/K$.
By the condition (LIE), the tangent space of the image of the period map is equal to the tangent space of $H(\mathbb{R})/K$. Hence the tangent space of $H(\mathbb{R})/K$ itself is horizontal.
Therefore $H$ is of Hermitian type and in particular $K$ is maximal compact.

Using this Proposition, the proofs of Theorem~\ref{Theorem1} and Theorem~\ref{Theorem2} are reduced to the proof of the 
following Theorem: 

\begin{theorem}\label{Theorem3} Let $X=\Gamma \backslash D$ be a Mumford-Tate variety associated 
to the Mumford-Tate group $G$. Let $Y^0$ be a smooth, horizontal algebraic subvariety of $X$ such that
$Y^0$ has unipotent monodromies at infinity. Then, conditions (BIG) and (RPC) imply condition (Mon). 
\end{theorem}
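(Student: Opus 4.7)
Since every parallel Hodge tensor is a fortiori infinitesimally parallel, the inclusion $H_y \subset MT(\V_{Y^0})_y$ is automatic, and condition (M-T) amounts to the reverse inclusion (up to connected components and derived groups). The plan is to establish this by showing that, under (BIG) and (RPC), every Hodge tensor $t \in T^{\bullet}(\V_y)$ with $\theta_Y(t)(y)=0$ in fact extends to a $\pi_1(Y^0,y)$-invariant, and hence parallel, section of the tensor VHS on $Y^0$. As such parallel Hodge tensors are precisely the invariants of $MT(\V_{Y^0})_y$, this forces $MT(\V_{Y^0})_y\subset H_y$ and gives (M-T).

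\textbf{Reduction to curves via (BIG).} Fix such a tensor $t$. The inclusion $T_{C_i,y}\subset T_{Y^0,y}$ for each special curve $C_i$ immediately implies $\theta_{C_i}(t)(y)=0$. By (BIG), the $\Q$-Zariski closures in $G$ of the monodromy representations of $\pi_1(\bigcup_i C_i^0,y)$ and of $\pi_1(Y^0,y)$ coincide, so a tensor invariant under the image of $\pi_1(\bigcup_i C_i^0,y)$ is automatically invariant under that of $\pi_1(Y^0,y)$. Using connectedness of $\bigcup_i C_i^0$, it therefore suffices to prove that, for each $i$, the infinitesimal flatness of $t$ at $y$ along $C_i$ propagates to a $\pi_1(C_i^0,y)$-invariant, equivalently $\theta_{C_i}$-flat, global section of $T^{\bullet}(\V)|_{C_i}$.

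\textbf{Propagation along $C_i$ via (RPC).} This is the technical core. The equalities
\[
\mu(N^{j}_{C_i/Y}/N^{j-1}_{C_i/Y}) = \mu(N^{j}_{C_i/X}/N^{j-1}_{C_i/X})
\]
form the equality case of an Arakelov-type slope bound coming from the Griffiths-Schmid semi-negativity of the Higgs quotients along $C_i$: each filtration piece of $N_{C_i/Y}$ attains the maximal degree compatible with the Higgs field $\theta$ on $C_i$. Generalizing the low-rank analyses of \cite{mvz09,mz11} from the Hermitian symmetric setting to a general Mumford-Tate group $G$, this maximality forces $(E,\theta)|_{C_i}$ to split as a direct sum of Higgs sub-bundles compatible with the Hodge decomposition $\mathfrak{h}_y = \mathfrak{h}^{-1,1}_y \oplus \mathfrak{h}^{0,0}_y \oplus \mathfrak{h}^{1,-1}_y$ of $\mathrm{Lie}(H_y)$. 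Under this splitting the subsheaf of $T^{\bullet}(E)|_{C_i}$ annihilated by $\theta_{C_i}$ is itself a Higgs sub-bundle on $C_i$, and the tensor $t$, which by Step 2 lies in its fiber at $y$, extends uniquely to a $\theta_{C_i}$-flat global section. On the flat-bundle side of the Simpson correspondence for polarized VHS, this section is exactly a $\pi_1(C_i^0,y)$-invariant tensor, as required.

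\textbf{Main obstacle.} The substantive difficulty lies in translating the numerical RPC equalities into the Higgs-bundle splitting used in Step 3. In the Hermitian symmetric case of \cite{mvz09,mz11} the horizontal and full holomorphic tangent bundles of $X$ coincide, and the relevant slope identity reduces to familiar Chern-class computations on the Hodge bundle. In the genuinely Mumford-Tate case the horizontal tangent bundle is typically strict inside $T_X$, the Hodge decomposition of $\mathfrak{g}$ carries additional weight pieces, and one must match the Harder-Narasimhan stratification of $N_{C_i/X}$ against the Hodge-weight stratification of the adjoint representation along $C_i$. Verifying that the RPC equalities give precisely the saturation of the Arakelov bound needed for this Higgs splitting is the genuinely new Mumford-Tate input beyond the Shimura-variety arguments of \cite{mvz09,mz11}.
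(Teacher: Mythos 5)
Your overall strategy is in the right spirit---reduce to the curves via (BIG), use (RPC) to control the Higgs bundle along each $C_i$, and conclude that infinitesimally parallel tensors become globally parallel---but Step~3 contains a genuine gap that the paper's argument fills with several nontrivial ingredients you pass over.

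The key problem is the sentence asserting that a ``$\pi_1(C_i^0,y)$-invariant, equivalently $\theta_{C_i}$-flat, global section'' exists, and that the tensor $t$, lying in the fiber at $y$ of the $\theta_{C_i}$-kernel, ``extends uniquely to a $\theta_{C_i}$-flat global section.'' Neither half of this is automatic. Even granting the Higgs splitting, a vector in a single fiber of a Higgs subbundle of slope zero does \emph{not} extend to a global section in general (a nontrivial degree-zero line bundle on a higher-genus curve has no nonzero sections at all), and a holomorphic $\theta$-flat section of a Higgs bundle is not the same thing as a flat section of the underlying local system---that passage requires knowing the relevant subbundle is a \emph{trivial} unitary subsystem, not just polystable of slope zero. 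The paper's proof supplies exactly the missing chain: it defines the subspace $W_{y\in Y}=\ker\theta_{y\in Y}$ of infinitesimal Hodge classes using the \emph{thickened} Higgs field (capturing Griffiths transversality in the normal directions to $C_i$, not merely along $C_i$---which is where the (RPC) slope equalities enter), proves via a slope-matching lemma that the thickening splits into maps between polystable bundles of equal slope so that these $W_{y\in Y}$ assemble into a unitary local subsystem $\U$ with $\Q$-structure on $Y^0$; this unitary system has finite monodromy and, thanks to the assumed unipotent monodromy at infinity, is in fact trivial, and only then is every infinitesimal Hodge class globally flat. Your proposal skips the unitary-local-system step and the finite/unipotent monodromy argument, which are precisely what converts ``$\theta$-flat at one point'' into ``parallel over all of $Y^0$.'' A secondary issue: you appeal to the Hodge decomposition of $\mathrm{Lie}(H_y)$ to describe the splitting forced by (RPC), but $H_y$ is the object being characterized; the paper instead phrases the lemma in terms of the decomposition $\V=\bigoplus S^i(\mathbb{L})\otimes\mathbb{T}_i\oplus\mathbb{U}$ and the Harder--Narasimhan pieces of $N_{C/X}$, which avoids this circularity.
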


The condition (BIG) may be replaced by other conditions: for example, one may require that there is an integral linear combination 
$\sum_{i \in I} a_iC_i$ which deforms in $X$ and fills $X$ out. In \cite{mz11}, we showed that this assumption implies condition (BIG) as well.
In this light, we pose the following 

\begin{conjecture} \label{coverspecial}
Suppose an irreducible Mumford-Tate variety $X$ associated to $G$ contains (infinitely many) special curves. Then, condition (BIG) holds for $X$, i.e., 
there are finitely many compactified special curves $C_i$ in $X$, such that the $\Q$-Zariski closure of the monodromy representation of 
$\pi_1(\bigcup_{i \in I} C_i^0,y)$ is equal to $G$. 
\end{conjecture}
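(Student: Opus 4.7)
The plan is to exploit the paper's standing assumption that $G$ is $\Q$-simple and adjoint, together with the Zariski density of $G(\Q)$ in a connected semisimple $\Q$-group, in order to show that the $G(\Q)$-conjugates of a single Shimura sub-datum already suffice to generate $G$ as a $\Q$-algebraic group. I would pick any one of the assumed special curves $C_0 \subset X$, arising from a morphism $\varphi_0 : G_0' \to G$ of $\Q$-groups via a Shimura datum for $G_0'$, and set $H_0 := \varphi_0(G_0')^{\mathrm{der}} \subset G$, the $\Q$-Zariski closure of the monodromy of $C_0$. If $H_0 = G$, then $C_0$ alone already satisfies (BIG); otherwise $H_0 \subsetneq G$ is a proper, non-trivial, non-central, connected semisimple $\Q$-subgroup.

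Let $N \subset G$ denote the $\Q$-algebraic subgroup generated by all conjugates $g H_0 g^{-1}$, $g \in G(\Q)$. By construction $N$ is stable under $G(\Q)$-conjugation, and by Zariski density of $G(\Q)$ in $G$ it is therefore normal in $G$. The $\Q$-simplicity and adjointness of $G$ then force $N = G$, since $H_0 \subset N$ is non-trivial and $G$ has no non-trivial proper normal $\Q$-subgroups. Noetherianity of the Zariski topology on algebraic subgroups yields finitely many $g_1, \dots, g_k \in G(\Q)$ such that $\{g_i H_0 g_i^{-1}\}_{i=1}^{k}$ already Zariski-generate $G$. Each such conjugate is realized by an actual special curve of $X$: the conjugated morphism $\mathrm{Ad}(g_i) \circ \varphi_0 : G_0' \to G$, paired with the $g_i$-translated sub-period-domain in $D$, constitutes a Shimura subdatum with image $g_i H_0 g_i^{-1}$, and replacing the level structure of $C_0$ by a suitable finite-index sub-lattice (available because $\Gamma$ and $g_i \Gamma g_i^{-1}$ are commensurable) yields a special curve $C_i$ in $X$. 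The hypothesis that $X$ contains infinitely many special curves enters precisely to guarantee that these conjugated data genuinely descend to distinct special curves of $X$ rather than collapsing under the $\Gamma$-action. The $\Q$-Zariski closure of the combined monodromy of $\bigcup_{i=1}^{k} C_i^0$ then contains every $g_i H_0 g_i^{-1}$ and therefore equals $G$, giving (BIG).

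The main obstacle, in my view, is this final geometric realization step: converting an abstractly conjugated Shimura subdatum into a genuine special curve of the particular quotient $X = \Gamma \backslash D$, given the absence of Hecke correspondences and of good compactifications on Mumford-Tate varieties, both of which the paper explicitly flags. The Lie-theoretic core, namely that $G(\Q)$-conjugates of any proper non-central sub-$\Q$-group of a $\Q$-simple adjoint $G$ Zariski-generate $G$, is formal and follows from Borel density plus Noetherianity. It is presumably this geometric descent, together with the need for uniform compatibility with the fixed lattice $\Gamma$, that keeps the statement at the level of a conjecture rather than a theorem.
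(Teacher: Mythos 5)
This statement is a \emph{conjecture} in the paper: the authors give no proof, note only that it is known for $G=\SO(2,n)$ and $G=\SU(1,n)$ and open already for $G=\Sp_{2g}$ with $g$ large, and merely sketch the Hecke-translate strategy you are pursuing as a possible line of attack. So the question is whether your argument actually closes the gap, and it does not. Your Lie-theoretic core is fine: for $G$ connected, $\Q$-simple and adjoint, the algebraic subgroup generated by the $G(\Q)$-conjugates of a nontrivial connected $H_0$ is normalized by the Zariski-dense set $G(\Q)$, hence normal, hence all of $G$, and Noetherianity extracts finitely many conjugates. The fatal step is the one you yourself flag as ``the main obstacle'': realizing each $g_iH_0g_i^{-1}$ by a special curve \emph{in the fixed quotient} $X=\Gamma\backslash D$. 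Your appeal to commensurability of $\Gamma$ and $g_i\Gamma g_i^{-1}$ presupposes that $\Gamma$ is arithmetic (or at least commensurated by $G(\Q)$); but in this paper $\Gamma$ is only the monodromy group of a period map, a finitely generated discrete subgroup, and the paper explicitly states that Hecke operators are not available on Mumford--Tate varieties. For general $\Gamma$ the intersection $\Gamma\cap g_iH_0(\R)g_i^{-1}$ need not be a lattice in $g_iH_0(\R)g_i^{-1}$, so the image of $g_i\cdot D(H_0)$ in $X$ need not be a Shimura curve, or even a closed algebraic curve, at all.

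Two further problems remain even in the arithmetic case, which is presumably why the conjecture is open even for $\Sp_{2g}$. First, condition (BIG) concerns the monodromy of $\pi_1\bigl(\bigcup_i C_i^0,\,y\bigr)$ at a \emph{single base point}, so the union must be connected through $y$; if your finitely many translates are pairwise disjoint, the fundamental group of the union based at $y$ sees only the component containing $y$, and its monodromy is still just a conjugate of $H_0$. The paper's own suggestion is to use Hecke translates fixing a common CM point $y$, but then the available $g_i$ are constrained to (a coset structure related to) the stabilizer of a lift of $y$, and it is not known that these restricted conjugates Zariski-generate $G$ --- this is exactly where the difficulty sits. Second, your invocation of the hypothesis that $X$ contains \emph{infinitely many} special curves is not a real use of it: the existence of infinitely many special curves somewhere in $X$ does not imply that the particular conjugated data produced by your Noetherian argument descend to curves in $X$. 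In the cases where the conjecture is known, that hypothesis is used in an essentially different (quantitative/equidistributional) way. In short, the reduction to group theory is correct but the geometric realization and connectivity steps are precisely the unproved content of the conjecture, so the argument does not constitute a proof.
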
 

In addition, one wants to find an effective bound of the number of special curves needed.
Conjecture~\ref{coverspecial} is known to be true in the case where $G=SO(2,n)$ and $G=SU(1,n)$ for $n \ge 1$, see \cite[Remark 3.7]{mz11}.
However, it appears to be open even in the case $G={\rm Sp}_{2g}$ for large $g$. 
It may be possible to solve this conjecture by looking at one special curve $C^0$ containing a CM-point $y$ and taking 
finitely many Hecke translates of $C^0$ which fix the point $y$.

Theorem~\ref{Theorem3} will be proved in the last section. In the sections before, we recall the notions of special subvarieties and 
explain the condition (RPC). 

\subsection*{Acknowledgements} This work and the first author were supported by a project of M\"uller-Stach and Zuo 
funded inside SFB/TRR 45 of Deutsche Forschungsgemeinschaft. We would like to thank C. Daw, B. Edixhoven and E. Ullmo for discussion and the 
referee for helpful comments and suggestions. 

\section{Mumford-Tate groups and Hodge classes} \label{mumtate}

For any $\Q$-algebraic group $M$, we denote by $M_\R$ the associated $\R$-algebraic group.
Let $V$ be a $\Q$-Hodge structure with underlying $\Q$-vector space also denoted by $V$. This corresponds to a real representation 
\[
h: \SSS \longrightarrow GL(V_{\R}) 
\]
of the Deligne torus $\SSS={\rm Res}_{\C/\R} \G_m$. 

\begin{definition} 
The (large) Mumford-Tate group $MT(V)$ of $V$ is the smallest $\Q$-algebraic subgroup of $GL(V)$ such that $MT(V_{\R})$ contains the image of $h$.
The (special) Mumford-Tate group, or Hodge group, $Hg(V)=SMT(V)$ is the smallest $\Q$-algebraic subgroup of $SL(V)$ such that $SMT(V_{\R})$ contains the image 
of the subgroup ${\rm S^1} \subset \SSS$. 
\end{definition}

Depending on the context, we will use both groups under the general name Mumford-Tate group.
If one looks at all Hodge classes in $V^{\otimes i} \otimes V^{\vee \otimes j}$ for all $(i,j)$, then the special Mumford-Tate group $SMT(V)$
is precisely the largest $\Q$-algebraic subgroup $G \subset Sp(2g)$ fixing all Hodge classes in such tensor products. 

\begin{example} Let us look at Hodge structures of weight $1$. 
We fix a level $N$ structure $A_g^{[N]}$ on $A_g$ with 
$N \ge 3$. Therefore, there is a universal family $f \colon U \to A_g$ over $A_g$. 
Let $\V=R^1f_*\Q$ be the natural $\Q$-local system of weight one on $A_g$. We denote by 
\[
\V_{\C}^\otimes= \bigoplus_{i,j} \V_{\C}^{\otimes i} \otimes \V_{\C}^{\vee \otimes j} 
\]
the full tensor algebra. This is an infinite direct sum of polarized local systems, where each summand $\V_{\C}^{\otimes i} \otimes \V_{\C}^{\vee \otimes j}$ 
carries a family of Hodge structures of weight $i-j$. A Hodge class in $\V^\otimes_{\C}$ is a flat section in some finite dimensional subsystem 
of $\V^\otimes_{\C}$ defined over $\Q$ and corresponding fiberwise to a $(p,p)$-class.
\end{example} 

\section{Special Subvarieties in $A_g$} \label{shimura}

Let us recall some useful notation concerning Shimura varieties and their special subvarieties. 

\begin{definition}[Shimura datum]
A Shimura datum is a pair $(G,X)$ consisting of a connected, reductive algebraic group
$G$ defined over $\Q$ and a $G(\R)$-conjugacy class $X \subset {\rm Hom}(\SSS,G_\R)$
such that for all (i.e., for some) $h \in X$, \\
(i) The Hodge structure on ${\rm Lie}(G)$ defined by ${\rm Ad} \circ h$ is of type 
$(-1,1)+(0,0)+(1,-1)$. \\
(ii) The involution ${\rm Inn}(h(i))$ is a Cartan involution of $G^{\rm ad}_\R$. \\
(iii) The adjoint group $G^{\rm ad}$ does not have factors defined over $\Q$ onto which $h$ 
has a trivial projection. 
\end{definition}

The connected components of $X$ form $G(\R)^+$-conjugacy classes. 
The weight cocharacter $h \circ w: \G_{m,\C} \to G_\C$ does not depend on the choice of $h$. \\

For a compact open subgroup $K\subset G(\mathbb{A}_f)$,  we consider the double coset space $Sh_K(G,X) =G(\mathbb{Q})\setminus (X\times G(\mathbb{A}_f)/K)$.
We denote an element of this set by $[x,aK]$ ($x\in X$, $aK\in G(\mathbb{A}_f)/K$). The Shimura variety associated to the above Shimura datum
is defined as $Sh(G,X)=\varprojlim Sh_K(G,X)$. Note that $Sh_K(G,X)$ can be recovered from $Sh(G,X)$ as the quotient modulo $K$ by the theory of Shimura varieties. 
An element $\gamma \in G(\mathbb{A}_f)$ defines a map $Sh_{K^{\prime}}(G,X)\to Sh_K(G,X)$,  $[x,aK^{\prime}]\mapsto [x,a\gamma K]$ called the 
\emph{Hecke translate} for $K^{\prime}\subset \gamma K \gamma^{-1}$. This map induces a (right) action of $G(\mathbb{A}_f)$ on $Sh(G,X)$.
We refer to \cite{moonen} for an accessible reference concerning Shimura varieties. \\

A morphism of Shimura data $(M, X^{\prime})\to (G,X)$ is a homomorphism $M\to G$ of algebraic groups
sending $X^{\prime}$ to $X$. A morphism of Shimura varieties $Sh(M, X^{\prime})\to Sh(G,X)$ is an inverse system of
regular maps of algebraic varieties compatible with the action of $G(\mathbb{A}_f)$.\\

Denote by $(GSp(2g),\H_g^{\pm})$ the Shimura datum in the sense of Deligne \cite{deligne} 
defining $A_g=A_g^{[N]}$ with level structure given by the compact open subgroup $K(N)$ of $GSp(2g)(\mathbb{A}_f)$.

\begin{definition}[Special Subvarieties]\label{specialdef}
A special subvariety of $A_g$ is  a geometrically irreducible component of a Hecke translate of 
the image of some morphism $Sh_K(G,X)\rightarrow A_g=Sh_{K(N)}(GSp(2g),\H_g^{\pm})$, 
which is defined by an inclusion of a Shimura subdatum $(G,X)\subset(GSp(2g),\H_g^{\pm})$ 
together with some compact open subgroup $K\subset G(\mathbb{A}_f)$ such that $K\subset K(N)$.
\end{definition}

In other words, by the above notation, there is a sequence 
\[
Sh(G,X)_\C \longrightarrow Sh(GSp(2g),\H_g^{\pm})_\C {\buildrel \gamma \over \longrightarrow} 
Sh(GSp(2g),\H_g^{\pm})_\C {\buildrel \text{quot} \over \longrightarrow} A_g=Sh_{K(N)}(GSp(2g),\H_g^{\pm})
\]
where $\gamma \in G(\A_f)$ and the second map is the Hecke operator defined above.\\ 

Special subvarieties are totally geodesic subvarieties with respect to the natural Riemannian (Hodge) metric, 
i.e., geodesics which are tangent to a special subvariety stay inside. 
In fact, there is almost an equivalence by a result of Abdulali \cite{Abdul} and Moonen: 

\begin{proposition}
An irreducible algebraic subvariety of $A_g$ is special if and only if it is totally geodesic and contains a CM point. 
\end{proposition}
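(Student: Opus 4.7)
The plan is to prove both directions of the equivalence. For the forward implication --- that a special subvariety is totally geodesic and contains a CM point --- I would argue as follows. By Definition~\ref{specialdef}, $Y$ arises as a component of the image of a morphism induced by a Shimura subdatum $(G,X) \hookrightarrow (GSp(2g), \H_g^{\pm})$, possibly after a Hecke translate. The preimage of $Y$ in $\H_g$ is a finite union of orbits of the form $g \cdot G(\R)^+ \cdot x$, each of which is a Hermitian symmetric subspace of $\H_g$ because axiom (ii) of a Shimura datum provides a compatible Cartan involution on $G^{\rm ad}_\R$. Hence $Y$ is totally geodesic in $A_g$. Moreover, CM points of $Sh_K(G,X)$ are dense, since Hodge morphisms $\SSS \to G_\R$ factoring through maximal $\Q$-tori of $G$ form a dense subset of $X$; these map to CM points of $A_g$, and so $Y$ contains such a point.

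For the converse, assume $Y \subset A_g$ is irreducible, totally geodesic, and contains a CM point $y$. Lift $Y$ to $\H_g$ by picking a preimage $\tilde y$ of $y$ and letting $\tilde Y$ be the connected component of the preimage of $Y$ containing $\tilde y$. Writing $\mathfrak{g} = {\rm Lie}\, GSp(2g)(\R)$ with Cartan decomposition $\mathfrak{g} = \mathfrak{k} \oplus \mathfrak{p}$ at $\tilde y$, so that $T_{\tilde y}\H_g$ is identified with $\mathfrak{p}$, the subspace $\mathfrak{m} := T_{\tilde y}\tilde Y$ lies inside $\mathfrak{p}$. The total geodesy of $\tilde Y$ in this symmetric space, combined with the curvature formula $R(X,Y)Z = -[[X,Y],Z]$, forces $[\mathfrak{m},[\mathfrak{m},\mathfrak{m}]] \subset \mathfrak{m}$. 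Consequently $\mathfrak{h}_\R := [\mathfrak{m},\mathfrak{m}] \oplus \mathfrak{m}$ is a real Lie subalgebra of $\mathfrak{g}$, and the associated connected real Lie subgroup $H_\R \subset GSp(2g,\R)$ satisfies $\tilde Y = H_\R^+ \cdot \tilde y$.

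The main obstacle is showing that $H_\R$ is the group of real points of a $\Q$-algebraic subgroup $H \subset GSp(2g)$, and this is precisely where the CM hypothesis enters. At $y$, the Hodge morphism $h_y \colon \SSS \to GSp(2g)_\R$ factors through a $\Q$-algebraic torus $T \subset GSp(2g)$, so the induced Hodge structure on $\mathfrak{g}$ --- of type $(-1,1)+(0,0)+(1,-1)$ --- is defined over $\Q$ and $T$-equivariant. The complex tangent space $\mathfrak{m}_\C \subset \mathfrak{g}^{-1,1} \oplus \mathfrak{g}^{1,-1}$ is stable under the complex structure of $\H_g$ at $\tilde y$ because $\tilde Y$ is a complex submanifold, and the restricted Hodge structure on $\mathfrak{h}_\R$ is again of type $(-1,1)+(0,0)+(1,-1)$, hence polarizable. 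Following Moonen's descent argument, one takes $H$ to be the $\Q$-Zariski closure of $H_\R$ inside $GSp(2g)$; the CM torus $T \subset H_\R$, being already $\Q$-rational, together with the monodromy of the restricted variation forces $H(\R)^\circ$ to have Lie algebra precisely $\mathfrak{h}_\R$, i.e., $\mathfrak{h}_\R$ is the set of real points of a $\Q$-Lie subalgebra of $\mathfrak{g}$.

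Once $H$ is constructed, the pair $(H, H(\R) \cdot h_y)$ is verified to be a Shimura subdatum: axiom (i) holds because ${\rm Lie}\, H$ is a sub-Hodge structure of ${\rm Lie}\, GSp(2g)$; axiom (ii) is inherited by restricting the Cartan involution at $h_y(i)$; and axiom (iii) can be arranged by passing to the adjoint quotient if necessary. The induced morphism $Sh_{K'}(H, H(\R) \cdot h_y) \to A_g$ has image equal to $Y$, showing that $Y$ is special by Definition~\ref{specialdef}. The rationality descent in the third paragraph is the substantive step; once it is in place, the remaining verification is formal.
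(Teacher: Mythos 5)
The paper does not actually prove this statement; it simply cites Theorem 6.9.1 of Moonen's survey (the result of Abdulali and Moonen), so your proposal is being measured against that reference rather than against an argument in the text. Your forward direction is fine and standard: the lift of a special subvariety is an orbit $g\cdot G(\R)^+\cdot x$, hence a Hermitian symmetric subspace and totally geodesic, and special points are dense in $Sh_K(G,X)$.

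The converse, however, has a genuine gap at exactly the step you flag as "substantive", and the claim you make there is not merely unproved but false as stated. From total geodesy you correctly obtain a Lie triple system $\mathfrak{m}=T_{\tilde y}\tilde Y\subset\mathfrak{p}$ and a real subalgebra $\mathfrak{h}_\R=[\mathfrak{m},\mathfrak{m}]\oplus\mathfrak{m}$ with $\tilde Y=H_\R^+\cdot\tilde y$. But $\mathfrak{h}_\R$ need not be defined over $\Q$ even when $Y$ \emph{is} special. Take a real quadratic field $F$ and the Hilbert modular surface attached to ${\rm Res}_{F/\Q}\,SL_2$: the curve $\H\times\{\tau\}$ with $\tau$ a CM point is special, yet $\mathfrak{h}_\R$ is the first $\mathfrak{sl}_2(\R)$-factor of $\mathfrak{sl}_2(\R)\oplus\mathfrak{sl}_2(\R)$, which is interchanged with the second factor by ${\rm Gal}(F/\Q)$ and is therefore not the real locus of any $\Q$-subalgebra. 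Correspondingly, the $\Q$-Zariski closure of $H_\R$ is all of ${\rm Res}_{F/\Q}\,SL_2$, whose orbit through $\tilde y$ is the whole of $\H\times\H$, strictly larger than $\tilde Y$; so your assertion that the closure has Lie algebra "precisely $\mathfrak{h}_\R$" fails, and the Shimura subdatum realizing $Y$ cannot be built from $H_\R$ alone. The actual mechanism in Moonen's proof is different: one first shows that an \emph{algebraic} totally geodesic subvariety is weakly special, i.e., the image of $X_1^+\times\{y_2\}$ for a decomposition $(H^{\rm ad},X_H^{\rm ad})=(H_1,X_1)\times(H_2,X_2)$ of a sub-Shimura datum --- this uses the algebraicity of $Y$ and Andr\'e's theorem that the algebraic monodromy group is normal in the derived Mumford--Tate group, neither of which appears in your argument --- and only then does the CM point enter, to force $y_2$ to be a special point so that $X_1^+\times\{y_2\}$ defines a special subvariety. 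Your sketch conflates these two steps into a single descent that does not go through.
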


\begin{proof} See Theorem 6.9.1 in Moonen \cite{moonen}. 
\end{proof}

\section{Relative Proportionality in $A_g$} \label{relprop}

Consider a non-singular projective curve $C$ and an embedding
$$
\varphi: C \hookrightarrow Y \hookrightarrow \overline{A}_g,
$$
where $Y \subset \overline{A}_g$ is a smooth projective subvariety as in the introduction. We denote by 
$C^0:=\varphi^{-1}(Y^0)\not=\emptyset$ the ''open'' part, where $Y^0=Y \cap A_g$. Assume that $C^0$ is a special curve in the following. 
Let $S_C$ and $S_Y$ be the intersections of $C$ and $Y$ with $S$. We assume overall that such intersections are transversal. \\

The logarithmic normal bundles of $C$ in $Y$ and $\overline{A}_g$ are defined by the exact sequences
\[
0 \to T_C(-\log S_C)  \to T_{\overline{A}_g}(-\log S) \to N_{C/\overline{A}_g} \to 0,
\]
\[
0 \to T_C(-\log S_C)  \to T_{Y}(-\log S_Y) \to N_{C/Y} \to 0.
\]

Let $N^\bullet_{C/Y}$ be the Harder-Narasimhan filtration on the logarithmic normal bundle $N_{C/A_g}$ intersected with $N_{C/Y}$.
The following definition was given in \cite[Def. 1.4]{mz11}.

\begin{definition}[Relative Proportionality Condition (RPC)]{~}\\ 
The map $\varphi: C \hookrightarrow Y$ satisfies the relative proportionality condition (RPC), if the slope inequalities
\[
\mu(N_{C/Y}^{i}/N_{C/Y}^{i-1})\leq\mu (N_{C/\overline A_g}^{i}/N_{C/\overline A_g}^{i-1}),\quad i=0,1,2
\]
are equalities. For the slopes, one gets by \cite{mz11}:
\begin{align*}
\mu (N_{C/\overline A_g}^{2}/N_{C/\overline A_g}^1)    & =0, \cr 
\mu (N_{C/\overline A_g}^{1}/N_{C/\overline A_g}^0)    & =\frac{1}{2} \deg T_C(-\log S_C), \cr 
\mu (N_{C/\overline A_g}^{0})                          & =\deg T_C(-\log S_C).
\end{align*}
Hence, we obtain a set of inequalities 
\begin{eqnarray*}
\mu(N_{C/Y}^{2}/N_{C/Y}^1) & \le & 0, \\
\mu(N_{C/Y}^{1}/N_{C/Y}^0) & \le & \frac{1}{2} \deg T_C(-\log S_C), \\  
\mu(N_{C/Y}^{0})           & \le & \deg T_C(-\log S_C). 
\end{eqnarray*}
Adding all three inequalities we obtain a single inequality
\begin{equation*} \label{rpc}
\deg N_{C/Y} \leq \frac{\rk (N^1_{C/Y})+\rk (N^0_{C/Y})}{2} \cdot \deg T_C(-\log S_C).
\end{equation*}
In case of equality, we say that (RPC) holds.
\end{definition}

\begin{example}\label{surfaces} 
In case $Y$ is a smooth projective surface, and $C$ is a smooth special curve in $Y$ intersecting the boundary $S_Y$ 
transversally, then 
\[
(K_Y+S_Y)\cdotp C+2C^2=0,
\]
if $Y$ is a Hilbert modular surface, and
\[
(K_Y+S_Y)\cdotp C+3C^2=0,
\]
if $Y$ is a ball quotient, see \cite[Thm. 0.1]{mvz09}, \cite[Ex. 1.6]{mz11} and \cite[Chap. 17]{cmp}.
\end{example} 

The main consequence of (RPC) is the following: 

\begin{proposition} \label{splitting}  {~} \\
(i) If $\varphi: C \hookrightarrow  Y$ satisfies (RPC), then $\varphi^*T_Y(-\log S_Y)$
is a direct summand of an orthogonal decomposition of $\varphi^*T_{\overline A_g}(-\log S)$ with respect to the Hodge metric.\\
(ii) If $Y^0 \hookrightarrow A_g$ is a special subvariety, then $\varphi^*T_Y(-\log S_Y)$
is a direct summand of an orthogonal decomposition of $\varphi^*T_{\overline A_g}(-\log S)$ with respect to the Hodge metric and
$\varphi: C \hookrightarrow Y$ satisfies (RPC).
\end{proposition}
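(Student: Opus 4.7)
To prove (i), I would exploit the Shimura-curve structure of $C$: since $\varphi^0:C^0\to A_g$ is a special curve, it arises from a morphism of algebraic $\Q$-groups, so the pulled-back VHS on $C^0$ is polystable. Consequently the HN filtration on $\varphi^*T_{\overline{A}_g}(-\log S)$ is in fact an orthogonal Hodge-theoretic decomposition with respect to the Hodge metric, whose three graded pieces are polystable of slopes $\deg T_C(-\log S_C)$, $\tfrac12\deg T_C(-\log S_C)$ and $0$ as in the definition. Write this orthogonal splitting as $\varphi^*T_{\overline{A}_g}(-\log S)=E_0\oplus E_1\oplus E_2$, where $E_i\cong \mathrm{gr}^i N_{C/\overline{A}_g}$ for $i\ge 1$ and $E_0$ absorbs $T_C(-\log S_C)$.

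Under (RPC), each induced inclusion $\mathrm{gr}^i N_{C/Y}\hookrightarrow \mathrm{gr}^i N_{C/\overline{A}_g}$ is a saturated semistable subsheaf of the same slope as the polystable ambient bundle. In the semisimple abelian category of semistable bundles of fixed slope on a curve, any saturated inclusion into a polystable object is a direct summand, and the summand can be chosen as the orthogonal complement with respect to any Hermitian--Einstein metric on the polystable object, here the Hodge metric. Assembling these orthogonal splittings of the graded pieces together with the orthogonal decomposition $\varphi^*T_{\overline{A}_g}(-\log S)=E_0\oplus E_1\oplus E_2$, one obtains the desired orthogonal direct-summand decomposition in which $\varphi^*T_Y(-\log S_Y)$ is a summand, proving (i).

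For (ii), if $Y^0$ is a special subvariety then by the Abdulali--Moonen proposition recalled in Section~\ref{shimura} it is totally geodesic in $A_g$, so the Levi--Civita connection of the Hodge metric preserves $T_{Y^0}$ along $Y^0$. Equivalently, the second fundamental form of $Y^0\hookrightarrow A_g$ vanishes, which implies that the orthogonal complement of $T_Y(-\log S_Y)$ in $T_{\overline{A}_g}(-\log S)|_Y$ with respect to the Hodge metric is a holomorphic subbundle; pulling back via $\varphi$ yields the first assertion. For the (RPC) part, observe that on the special curve $C$ both $\varphi^*T_Y(-\log S_Y)$ and its orthogonal complement are then Hodge-theoretic polystable summands, so each inherits an HN filtration whose graded pieces have the same slopes as those of the ambient one; the three slope inequalities in the definition of (RPC) are therefore equalities.

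The main obstacle is the key input used in (i), namely that a saturated semistable subsheaf of maximal slope inside a polystable Hermitian--Einstein bundle on a curve splits off as a holomorphic orthogonal direct summand. This is the bridge between the purely numerical content of (RPC) and the geometric conclusion of a holomorphic orthogonal splitting. It rests on the explicit homogeneous description of $\varphi^*T_{\overline{A}_g}(-\log S)$ on the Shimura curve $C$ (as a sum of automorphic bundles coming from a weight-one representation of $SL_2\to Sp(2g)$) together with the Narasimhan--Seshadri polystability/splitting theorem on curves; verifying that this Hodge metric actually is Hermitian--Einstein on each graded piece is the technical heart of the argument.
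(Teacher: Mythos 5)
The paper does not actually give an argument for Proposition~\ref{splitting}; it simply cites \cite[Prop.~1.5]{mz11}, so there is no internal proof to compare against word for word. Your plan captures the two essential ingredients one expects from that reference -- polystability of the relevant Higgs bundles over the Shimura curve $C$ via Simpson's correspondence, and the fact that on a curve a saturated subsheaf of a polystable bundle of the same slope splits off orthogonally with respect to the Hermitian--Einstein (here Hodge) metric.

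However, in part (i) there is a genuine gap at the assembly step. What (RPC) gives you directly, via your argument, is that each $\mathrm{gr}^i N_{C/Y}$ is an orthogonal direct summand of $R_i = \mathrm{gr}^i N_{C/\overline{A}_g}$. To conclude that $\varphi^*T_Y(-\log S_Y) = T_C(-\log S_C) \oplus N_{C/Y}$ is itself a holomorphic orthogonal summand of $T_C(-\log S_C)\oplus\bigoplus_i R_i$, you must additionally show that the intersection filtration $N^\bullet_{C/Y}\subset N_{C/Y}$ splits holomorphically, i.e.\ $N_{C/Y}\cong\bigoplus_i \mathrm{gr}^i N_{C/Y}$ sitting diagonally in $\bigoplus_i R_i$. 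This does not follow from the slope equalities alone: the subsheaf $N_{C/Y}\subset\bigoplus R_i$ could a priori be embedded in a nontrivially ``upper-triangular'' way, since the relevant extension groups $\mathrm{Ext}^1(\mathrm{gr}^i N_{C/Y}, N^{i-1}_{C/Y})$ between pieces of lower slope into pieces of higher slope need not vanish. The paper tacitly asserts this compatibility (the last displayed formula of Section~\ref{relprop}: ``this splitting is compatible with the decomposition $N_{C/Y}=\bigoplus \mathrm{gr}^i N_{C/Y}$'') but the justification, which in \cite{mz11} comes from the Higgs-bundle / Hodge-decomposition structure of $N_{C/Y}$ as a direct summand of $\mathrm{End}^{-1,1}(\mathcal{V})$, needs to be made part of the argument; you are missing that input here.

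For part (ii), your route via Abdulali--Moonen's totally-geodesic characterization is correct in spirit but is a genuinely different argument from what the cited reference (and the spirit of this paper) uses. For a special $Y^0$ one has a Hodge decomposition of the Lie algebra $\mathfrak{h}=\mathrm{Lie}\,H(\R)$ and an orthogonal decomposition $\mathfrak{sp}_{2g}=\mathfrak{h}\oplus\mathfrak{h}^\perp$ with respect to the Killing form, which directly realizes $T_Y(-\log S_Y)$ as an orthogonal, holomorphic, homogeneous summand of $T_{\overline{A}_g}(-\log S)|_Y$ -- no appeal to vanishing of the Riemannian second fundamental form is needed, and the (RPC) equalities then fall out because both $\varphi^*T_Y(-\log S_Y)$ and its complement are direct sums of Hodge sub-bundles already compatible with the slope decomposition $\bigoplus R_i$. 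Your version also has to be a bit careful that totally geodesic in the Riemannian sense implies the orthogonal complement of $T_Y$ is a \emph{holomorphic} subbundle and that this splitting is automatically compatible with the Harder--Narasimhan grading on $C$; these points are true on a locally symmetric K\"ahler manifold but deserve justification. The Lie-algebraic route makes both facts manifest.
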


\begin{proof}  \cite[Prop. 1.5]{mz11}.   
\end{proof}

In \cite[Formula 1.3]{mz11} we showed that, if $C^0$ is a special curve, one has a splitting
\[
\varphi^*  T_Y(-\log S_Y) \cong T_C(-\log S_C) \oplus N_{C/Y}.
\]
This splitting is induced from a corresponding splitting of $\varphi^*T_{\overline A_g}(-\log S)$. 
If, in addition, (RPC) holds, then this splitting is compatible with the decomposition
\[
N_{C/Y}=\bigoplus_{i=0}^2 N^{i}_{C/Y}/N^{i-1}_{C/Y}.
\]

\section{Special Subvarieties in $X=\Gamma \backslash D$} \label{griffiths}

As far as we know, there is no good notion of Hecke operators on Mumford-Tate domains $D=G(\R)/V$. In addition, 
there are no good compactifications of $X=\Gamma \backslash D$ known in these cases, since 
$X$ does not even carry any algebraic structure in general \cite{grt}. Here $\Gamma$ is the monodromy group.

Therefore, to avoid these two difficulties, by a \emph{special curve} in $X$ we will denote an \'etale morphism
\[
\varphi^0: C^0 \longrightarrow X=\Gamma \backslash G(\R)/V                                                                    
\]
from a Shimura curve $C^0$ to $X$, induced from a morphism of algebraic groups $G' \to G$ defined over $\Q$.
In other words, $C^0$ is the quotient of the orbit of a certain Hodge structure $h \in D$ under the conjugation action of $G'$. 
One can generalize this notion to arbitrary special subvarieties. \\

Let $h\in D$ be a Hodge structure with Mumford-Tate group $M_h$. The $M_h(\mathbb{R})$-orbit of $h$ under the conjugation action of $M_h(\mathbb{R})$ in $D$ is a Mumford-Tate domain 
$D({M_h})\subset D$ in the sense of \cite{ggk,cmp}. In other words, $D(M_h)$ is a connnected component of 
the image of a Shimura datum $(M,X')$ in the Mumford-Tate datum $(G,X)$, see \cite[Chap. 17]{cmp}. Using this notation, we define: 

\begin{definition}[]\label{specialdef2}
A \emph{special subvariety of Shimura type} in $X$ is a horizontal, algebraic subvariety $Z^0 \subset X$, such that there is a Hodge structure $h \in D$ 
with Mumford-Tate group $M=M_h$ such that $Z^0$ is a quotient of the orbit $D(M)$ by a discrete subgroup.  
\end{definition}

Note that we shall omit reference to the Hodge structure $h$ and simply speak of a
Mumford-Tate domain $D(M)$ as in the above definition. 

Hence, by our definition of a special subvariety $Z^0$, we have a commutative diagram
\[
\begin{xy}
\xymatrix{
D(M) \ar[d]^{} \ar[r]^{}&   D \ar[d]^{} \\
Z^0 \ar[r]^{} &  X } 
\end{xy}
\]
Note that we always require a special subvariety to be horizontal and algebraic, so that $Z^0$ is of Shimura type, i.e., 
$D(M)$ is a Hermitian symmetric domain. In most cases, $Z^0$ is a proper subvariety of $X$ by \cite{grt}.

\begin{remark}
More general notions of special subvarieties in Mumford-Tate varieties are conceivable, for example horizontal subvarieties of maximal dimension 
in Mumford-Tate varieties. But it is not clear whether such definitions have good properties. For example such varieties may not carry any CM points. 
\end{remark}

\section{Relative Proportionality in $X=\Gamma \backslash D$} \label{relprop2}

To define the relative proportionality condition (RPC), using the notation of the previous paragraph, we need first the following
observations.

Let $C^0 {\buildrel \varphi^0 \over \longrightarrow} Y^0 {\buildrel i \over \hookrightarrow}X$ be a
special curve and $Y^0$ an algebraic subvariety of $X=\Gamma \backslash D$. Let $Y$ be a smooth compactification of $Y^0$ and $C$ a compatible 
smooth compactification of $C^0$, which extends to a finite morphism $\varphi: C \to Y$. Note that for this we do not need to 
require that $X$ has an algebraic compactification. 

Denote by $S_Y=Y \setminus Y^0$ the boundary divisor, and by $S_C=C \setminus C^0$, so that $S_C$ is the pullback of $S_Y$ to $C$. 

Fix a base point corresponding to a Hodge representation $h: \SSS \to G_\R$ whose orbit under $G$ defines $X$. 
Let ${\mathfrak g}={\rm Lie}(G)$. As mentioned earlier, ${\mathfrak g}=\bigoplus_p {\mathfrak g}^{-p,p}$  carries a weight zero Hodge structure. 
If $K$ is a maximal compact subgroup containing $V$, then its complexified Lie algebra ${\mathfrak k}_{\mathbb C}$ is given by the sum for even $p$, whereas 
its complement ${\mathfrak p}_{\mathbb C}$ is the sum for all odd $p$ \cite[Sec. 12.5]{cmp}. For $p=1$, we obtain 
the horizontal, holomorphic tangent bundle, see Definition~\ref{horizontal}. The vertical tangent bundle is given by the quotient of Lie algebras
${\mathfrak k}/{\mathfrak v}$, where ${\mathfrak v}$ is the Lie algebra of $V$. This terminology comes from the fibration \cite{cmp,grt}
\[
\omega: D=G(\R)/V \longrightarrow G(\R)/K.
\]

We remark that the horizontal tangent bundle agrees with the usual tangent bundle, if $V=K$ and $D$ is a Hermitian symmetric domain, for
example in the case of $X=A_g$.\\

\begin{proposition} Assume that $Y^0$ (and hence $C^0$) has unipotent monodromies at infinity. Then  
the bundle $(\varphi^0)^*T^h_X$ on $C^0$ extends to a vector bundle on $C$ which we denote by $\varphi^*T^h_X(- \log S)$. 
\end{proposition}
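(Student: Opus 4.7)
The plan is to identify $(\varphi^0)^* T_X$ with a Hodge subquotient of the adjoint variation of Hodge structure pulled back to $C^0$, and then invoke Deligne's canonical extension together with Schmid's nilpotent orbit theorem to extend it across $S_C$.

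First, I would give a Hodge-theoretic description of $T_X$. Composing any Hodge representation $h \in D$ with the adjoint representation $\mathrm{Ad}: G \to GL(\mathfrak{g})$ endows $\mathfrak{g}$ with a polarizable pure Hodge structure of weight $0$ and decomposition $\mathfrak{g}_{\C} = \bigoplus_p \mathfrak{g}^{-p,p}$, as recalled in Section~\ref{griffiths}. Letting $h$ vary over $D$, this yields a polarizable variation of Hodge structure $\mathcal{A}$ of weight $0$ on $X$, whose Hodge filtration $F^{\bullet}\mathcal{A}$ satisfies $T_X \cong F^{-1}\mathcal{A}/F^0\mathcal{A}$, by the very definition of the horizontal tangent bundle as the homogeneous bundle associated with $\mathfrak{g}^{-1,1}$.

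Next, pulling back along $\varphi^0$, the variation $(\varphi^0)^*\mathcal{A}$ on $C^0$ has monodromy obtained by composing the monodromy representation $\pi_1(C^0) \to G(\Q)$ with $\mathrm{Ad}$. Since $\mathrm{Ad}$ sends unipotent elements to unipotent elements and the local monodromies of $C^0$ around $S_C$ are unipotent by hypothesis, $(\varphi^0)^*\mathcal{A}$ also has unipotent monodromies at infinity. Deligne's canonical extension then produces a unique locally free extension $\overline{\mathcal{A}}$ of the underlying flat bundle to $C$ on which the Gauss-Manin connection acquires logarithmic poles with nilpotent residues, and Schmid's nilpotent orbit theorem guarantees that each step of the Hodge filtration extends to a holomorphic subbundle $F^p \overline{\mathcal{A}} \subset \overline{\mathcal{A}}$ whose restriction to $C^0$ recovers $F^p(\varphi^0)^*\mathcal{A}$. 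One then simply defines
\[
\varphi^* T_X(-\log S) \;:=\; F^{-1}\overline{\mathcal{A}} \big/ F^0 \overline{\mathcal{A}},
\]
a locally free sheaf on $C$ whose restriction to $C^0$ is $(\varphi^0)^*T_X$.

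The only substantive step is the identification of $T_X$ with the type $(-1,1)$ subquotient of the adjoint VHS; after that, the existence and canonicity of the extension are formal consequences of the uniqueness of Deligne's extension under unipotent monodromy together with the uniqueness of Schmid's limit Hodge filtration. The unipotency hypothesis is essential here: without it, one would be forced to choose a nilpotent orbit in order to pin down an extension, and canonicity would be lost. I expect no serious obstacle beyond bookkeeping with Hodge filtration conventions; the real work is done by the classical extension theorems that we invoke.
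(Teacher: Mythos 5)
Your approach is correct and conceptually the same as the paper's — both rest on the Deligne canonical extension under unipotent monodromy — but the two differ in which variation of Hodge structure is used to carry $T_X$, and your choice is arguably the cleaner one. The paper embeds $T_X$ into $\bigoplus_{p,q}\mathcal{Hom}(\mathcal{V}^{p,q},\mathcal{V}^{p-1,q+1})$, i.e.\ into the $(-1,1)$ piece of $\mathcal{End}(\mathcal{V})$ for the universal weight-$k$ local system $\mathcal{V}$, and then asserts that the Deligne extensions of the $\mathcal{V}^{p,q}$ carry along the subbundle $T_X$ because the inclusion ``is defined by explicit conditions.'' You instead realize $T_X$ directly as the Hodge subquotient $F^{-1}\mathcal{A}/F^0\mathcal{A}$ of the adjoint local system $\mathcal{A}=(\mathfrak{g},\mathrm{Ad})$, which is intrinsic (no auxiliary faithful representation needed) and makes the extension a formal consequence of Schmid's result that the Hodge filtration extends as a filtration by subbundles of the Deligne extension — an ingredient the paper uses implicitly when it says the $\mathcal{V}^{p,q}$ ``have a Deligne extension.'' Note that the two descriptions are compatible since $\mathfrak{g}\subset\mathcal{End}(V)$ is a sub-local-system and $\mathfrak{g}^{-1,1}$ is cut out inside $\mathcal{End}^{-1,1}(\mathcal{V})$ exactly by the adjoint conditions. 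One small point worth making explicit in your write-up: the adjoint bundle on $D$ is not literally a variation of Hodge structure (Griffiths transversality can fail on all of $D$), but its pullback to the horizontal curve $C^0$ is, which is all that is needed to apply Deligne--Schmid.
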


\begin{proof} Let $\mathcal{V}^{p,q}$ be the universal vector bundles on $D$ which parametrize the $(p,q)$-classes on $X$. 
The horizontal, holomorphic tangent bundle $T^h_X$ of $X$ is contained in a direct sum of the Hodge bundles: 
\[
T^h_{X}\subset {\mathcal End}^{-1,1} \left( \bigoplus_{p,q} \mathcal{V}^{p,q} \right) = 
\bigoplus_{p,q} {\mathcal Hom} \left( \mathcal{V}^{p,q},\mathcal{V}^{p-1,q+1}\right).
\]
All these bundles are homogenous on $D$, and the inclusion of the subbundle $T_{X}$ is defined by explicit conditions.
Over the algebraic variety $Y^0$, the restricted bundles $\mathcal{V}^{p,q}|_{Y^0} $ on the right hand side, and 
also the subbundle $T_{X}|_{Y^0}$, have a Deligne extension 
$\overline{\mathcal V}^{p,q}|_{Y^0}$ to $Y$. Therefore, $T^h_X|_{Y^0}$ and $(\varphi^0)T^h_X$ have natural extensions to $Y$ and $C$ which we denote by
$\varphi^*T^h_X(- \log S)$, although $S$ does not exist.
\end{proof}

Using this, we can define the logarithmic normal bundle $N_{C/X}$ through the exact sequence
\[
0 \to T_C(- \log S_C) \to \varphi^* T^h_X(- \log S) \to N_{C/X} \to 0.
\]
In a similar way, we have the exact sequence 
\[
0 \to T_C(- \log S_C) \to \varphi^* T_Y(- \log S_Y) \to N_{C/Y} \to 0.
\]
By a previous result \cite[Prop. 1.5.(ii)]{mz11} of ours, see Prop.~\ref{splitting}(ii) above, which is independent of $A_{g}$, 
we know that the logarithmic tangent bundle $T_{C}(-\log S_{C})$ is an orthogonal direct summand of the newly defined bundle $\varphi^*T_{X}(-\log S)$ 
with respect to the Hodge metric: 
\[
T_{C}(-\log S_{C})\hookrightarrow \varphi^{*}T_{X}(-\log S).
\]
We now show that certain local systems on $C^0$ split in a controlled way, giving a representation-theoretic proof of the following result of 
\cite{viehweg-zuo}. 

\begin{lemma} \label{decomp_lemma} Assume that $\mathbb{V}$ is a $\mathbb{C}$-variation of Hodge structures 
of weight $k$ over $C^0$ which comes from a $G(\R)$-representation on $X$ by restriction. Then, 
\[
\mathbb{V}=\mathbb{U} \oplus \bigoplus_i \left(S^{i}(\mathbb{L})\otimes
\mathbb{T}_{i} \right),
\]
where $\mathbb{L}$ is a weight one local system of rank $2$ and
$\mathbb{T}_{i}$ and $\mathbb{U}$ are unitary local systems of
weights $k-i$ and $k$ respectively.
\end{lemma}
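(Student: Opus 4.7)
The plan is to use that $C^0$ is a Shimura curve induced from a morphism of $\Q$-algebraic groups $\rho: G' \to G$, so that $\mathbb{V}|_{C^0}$ is the local system attached to the finite-dimensional complex representation of $G'(\R)$ obtained by composing $\rho$ with the $G(\R)$-representation defining $\mathbb{V}$. First I would reduce to a convenient real form of $G'$: because $C^0$ is a one-dimensional Shimura variety, the adjoint group $G'^{\mathrm{ad}}_\R$ has exactly one non-compact almost-simple factor, isomorphic to $\operatorname{PSL}_2(\R)$, and the remaining factors are compact. Passing to a suitable finite \'etale cover of $C^0$ (which does not affect the statement) and to a suitable cover of $G'$, one may assume $G'(\R) = \SL_2(\R) \times K_c$ with $K_c$ compact and that the Hodge cocharacter $h: \SSS \to G'_\R$ defining the Shimura datum takes values in the $\SL_2(\R)$-factor.

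Next I would decompose $V_\C$ as a representation of this product. Every finite-dimensional complex representation of $\SL_2(\R) \times K_c$ splits canonically as
\[
V_\C \;=\; W_0 \;\oplus\; \bigoplus_{i \geq 1} \bigl( S^i(V_{\mathrm{std}}) \otimes W_i \bigr),
\]
where $V_{\mathrm{std}}$ is the standard two-dimensional representation of $\SL_2$ and the multiplicity space $W_i$ carries an action of $K_c$. The standard $\SL_2$-representation, seen on $C^0$, underlies the tautological rank-two weight-one variation $\mathbb{L}$, e.g.\ the one coming from the universal elliptic curve attached to an $\SL_2$-Shimura datum. Each $W_i$, being a representation of the compact group $K_c$, produces a unitary local system $\mathbb{T}_i$ on $C^0$, and similarly $W_0$ produces a unitary local system $\mathbb{U}$.

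To conclude, I would verify that this algebraic splitting is in fact a decomposition as complex VHS and identify the weights. Since $h(\SSS)$ sits inside the $\SL_2(\R)$-factor, $K_c$ centralises $h$; hence the Hodge decomposition $V_\C = \bigoplus V^{p,q}$ is stable under $K_c$, and the isotypic decomposition by $K_c$-types is a splitting of $\mathbb{V}|_{C^0}$ as $\C$-VHS. Each $\mathbb{T}_i$ accordingly inherits a flat Hodge filtration, and combined with the unitarity of the underlying local system this forces $\mathbb{T}_i$ to be pure of a single weight. Additivity of weights in tensor products, together with $S^i(\mathbb{L})$ having weight $i$, pins down this weight as $k-i$; similarly $\mathbb{U}$ has weight $k$.

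The main obstacle will be the bookkeeping in the reduction to the product form $G'(\R) = \SL_2(\R) \times K_c$, in particular handling the centre of $G'$ and checking that the identification of $\mathbb{L}$ with the standard $\SL_2$-representation, together with its weight-one Hodge structure, descends correctly from the auxiliary finite cover back to the original Shimura curve. Once these bookkeeping issues are settled, everything else is a direct application of the representation theory of $\SL_2(\R) \times K_c$ together with the well-known fact that representations of compact groups produce unitary local systems on quotients.
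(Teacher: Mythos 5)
Your proposal is correct and follows essentially the same route as the paper: reduce to the Mumford--Tate group of the Shimura curve having the form $\SL(2)\times(\text{compact})$, decompose the restricted representation into symmetric powers of the standard $\SL_2$-representation tensored with representations of the compact factor, and observe that the latter give unitary local systems. Your additional care about why the isotypic splitting is compatible with the Hodge structure (the compact factor centralises $h$) and the weight bookkeeping only makes explicit what the paper leaves implicit.
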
 

\begin{proof} Since $C^0$ splits in at least one place, we may
assume that the special Mumford-Tate group of the Shimura curve $C^0$ has
the form $SL(2) \times U_{1}\times...\times U_{r}$ for some $r \ge 0$,
where the $U_{i}$ are compact Lie groups (i.e., anisotropic). This gives
rise to an embedding $SL(2) \times
U_{1}\times...\times U_{r} \hookrightarrow G$ of algebraic groups. Now, since the groups
$U_{i}$ are compact as real groups, it follows that every
representation of them is a unitary representation and it is
well-known that the representations of the group $SL(2)_\R$ are
direct sums of symmetric products of the standard representation.
Note also that the irreducible subrepresentations of the product
representation is a product of the irreducible subrepresentations
of each representation and that the product of unitary
representations is again unitary. This means that there is a
standard $2$-dimensional representation $\mathbb{L}$ and unitary
representations $\mathbb{T}_{i}$ and $\mathbb{U}$ such that
$\mathbb{V}$ has the asserted decomposition.
\end{proof} 

Note that, since $\mathbb{L}$ is a weight $1$ variation of Hodge
structures, and $C^0$ is a special curve, by results of \cite{viehweg-zuo}, its Deligne extension to $C$
corresponds to a Higgs bundle of the form
$(\mathcal{L}\oplus\mathcal{L}^{-1}, \sigma)$ such that the Higgs
field $\sigma:\mathcal{L}\to \mathcal{L}^{-1}\otimes
\Omega^{1}_{C}(\log S_{C})$ is an isomorphism, and hence
$\mathcal{L}^{2}\simeq \Omega^{1}_{C}(\log S_{C})$.

We can now apply Lemma~ \ref{decomp_lemma} to the universal $\C$-local system $\V_\C$ of weight $k \ge 1$ on $X$ coming from the 
$\Q$-local system $_\V$ described above. It implies that $(\varphi^0)^*\mathbb{V}_\C= \mathbb{U} \oplus \bigoplus_i \left( S^{i}(\mathbb{L})\otimes \mathbb{T}_{i} \right)$
for local systems $\mathbb{L}$, $\mathbb{T}_{i}$ and $\mathbb{U}$ over $C^0$. We denote the Higgs bundles on $C$ 
corresponding to the $\C$-local systems $(\varphi^0)^*\V_{\C}$, $\mathbb{T}_{i}$ and $\mathbb{U}$ by $\mathcal{V}$, 
$\mathcal{T}_i$ and $\mathcal{U}$. The bundles $\mathcal{T}_i$ and $\mathcal{U}$ have degree $0$ and their Higgs fields are zero. 
Note that the Higgs field of $S^{i}(\mathbb{L})$ comes from that of
$\mathcal{L}$, i.e., is equal to $S^{i}(\sigma)$ for $\sigma$ the Higgs field of $\mathcal{L}$. The Higgs field of $\mathbb{V}$ 
respects the direct sums and vanishes on $\mathbb{U}$. Therefore, 
\[
T_{C}(-\log S_{C}) \subseteq \bigoplus_\Box
{\mathcal Hom}\left(\mathcal{L}^{i-2\mu}\otimes \mathcal{T}_{i,a}, \mathcal{L}^{j-2\nu}\otimes \mathcal{T}_{j,b}\right) 
\subset {\mathcal End}^{-1,1} \left( \bigoplus_{p+q=k} \mathcal{V}^{p,q} \right), 
\]
where the bundles $\mathcal{T}_{i,a}$, $\mathcal{T}_{j,b}$ have slope $0$, and 
$\Box=\{(\mu,i,\nu,j, a, b) \in \N_0^6 \mid \mu \le i \le k, \; \nu \le j \le k, \; a\le k-i, \; b\le k-j , \; j+b-\nu=i+a-\mu-1\}$. 
In the above sum, $T_{C}(-\log S_{C})$ is a direct summand and orthogonal with respect to the
natural Riemannian (i.e., Hodge) metric. Let $T_{C}(-\log S_{C})^{\perp}$ denote the orthogonal
complement of $T_{C}(-\log S_{C})$ in this sum. Thus, there is a decomposition
\[
\varphi^{*}T_{X}(-\log S)= T_{C}(-\log S_{C})\oplus N_{C/X}, 
\]
such that, as in \cite[Section 1]{mz11},
\[
N_{C/X} \subset T_{C}(-\log S_{C})^{\perp} \oplus \bigoplus_{p+q=k}{\mathcal Hom}
\left( \mathcal U^{p,q}, \mathcal V^{p-1,q+1} \right) \oplus \bigoplus_{p+q=k}{\mathcal Hom}
\left( \mathcal V^{p,q}, \mathcal{U}^{p-1,q+1} \right ).
\]
In particular, $N_{C/X}$ is a sum of polystable bundles of different slopes. Hence, one has a Harder-Narasimhan decomposition
\[
N_{C/X}=\bigoplus_{i=0}^s R_{i}
\]
with polystable bundles $R_i$ of strictly increasing slopes $\mu(R_{i}) < \mu(R_{i+1})$.
The length $s$ is an integer depending on $C$ and $X$.

Accordingly, the Harder-Narasimhan filtration on $N_{C/X}$ is given by 
\[
N^{i}_{C/X}=R_{0}\oplus
R_{1}\oplus \cdots \oplus R_{i}, \text{  } 0\leq i \leq s. 
\]
Taking the induced filtration $N^i_{C/Y}:=N^i_{C/X} \cap N_{C/Y}$ on $N_{C/Y}$ obtained
by intersection, we get a filtration on $N_{C/Y}$: 
\[
N^{0}_{C/Y}\subset N^{1}_{C/Y}\subset \cdots \subset
N^{s}_{C/Y}=N_{C/Y}.
\] 
In analogy with the $A_g$ case, we can now make the following definition:

\begin{definition}[Relative Proportionality Condition (RPC)]{~}\\ 
We say that $\varphi:C\to Y$ satisfies the \emph{relative proportionality condition} (RPC), if the
slope inequalities
\[
\mu(N^{i}_{C/Y}/ N^{i-1}_{C/Y})\leq \mu(N^{i}_{C/X}/
N^{i-1}_{C/X}), \text{   } i=0,\ldots,s
\]
are equalities. 
\end{definition}

Adding all these inequalities, we obtain a single inequality 
\[
\deg N_{C/Y} \leq r(C,Y,X) \cdot \deg T_{C}(-\log S_{C}), 
\]
where $r(C,Y,X) \in \Q$ is a rational number depending on $C$, $Y$ and $X$, and hence on $G$. 
However, it is not possible to write $r(C,Y,X)$ in a closed form, as in the case of $X=A_g$, since it would depend on $G$ and not only on the weight $k$.
In example~\ref{surfaces}, the constant $r(C,Y,X)$ is $1$ in the case of Hilbert modular surfaces and $\frac{1}{2}$ in the case of ball quotients.
The assertions of Proposition~\ref{splitting} and \cite[Formula 1.3]{mz11} also hold in this more general case by induction over $s$, i.e., we have a splitting 
\[
\varphi^*  T_Y(-\log S_Y) \cong T_C(-\log S_C) \oplus \bigoplus_{i=0}^s N^{i}_{C/Y}/N^{i-1}_{C/Y},
\]
if $C^0$ is a special curve in $X$ satisfying (RPC). 

\section{Proof of Theorem~\ref{Theorem3}}

In this section we prove Theorem~\ref{Theorem3}. From this, Theorem~\ref{Theorem1} and 
Theorem~\ref{Theorem2} follow, as we showed in the introduction.

We assume conditions (BIG) and (RPC) and look at a smooth and horizontal subvariety $Y^0 \hookrightarrow X$, 
where $X=\Gamma \backslash G(\R)/K$ is a (connected) Mumford-Tate variety. 

We choose a base point $y \in \bigcup_i C_i^0$. Note that $X$ carries a family of $\Q$-Hodge structures $\V$ as a local system.
It does not underly a variation of $\Q$-Hodge structures in general, since Griffiths transversality may not hold. 
However, when restricted to $Y^0$, or the curves $C_i^0$, this will be the case, since $Y^0$ is horizontal. We now consider 
the restriction of $\V$ to $Y^0$ only.

Let $H$ be the $\Q$-algebraic group from condition (LIE). By Definition~\ref{kernels}, it fixes precisely the Hodge classes in these vector spaces $W_{y \in Y}$.
For ease of notation, set $C=C_i$ in what follows (till end of the proof of Lemma~\ref{thickening}) and denote the restriction ${\V_{\C}}$ to $C$ by $\V_{C}$.
Since the $\C$-local system $\V_{C}$ has the form $\mathbb{V}_{C}=\bigoplus
\left( S^{i}(\mathbb{L})\otimes \mathbb{T}_{i} \right) \bigoplus \mathbb{U}$,
where $\mathbb{L}$ is related to a $\C$-local system of weight $1$
corresponding to a Higgs bundle
$\mathcal{L}\oplus\mathcal{L}^{-1}$, by Lemma~\ref{decomp_lemma} and (proof of) \cite{viehweg-zuo}, Proposition 3.4, the Higgs field is given by
\[
S^{i-2\mu}(\sigma): \mathcal{L}^{i-2\mu}\to
\mathcal{L}^{i-2\mu-2}\otimes \Omega^{1}_{Y}(\log S).
\]
In particular, the sheaves $E^{p,q}$ can be decomposed into a direct
sum of  polystable sheaves $E^{p,q}_{\iota}$ of slopes
$\mu(E^{p,q}_{\iota})=\iota \deg \mathcal{L}$ for $\iota \in [-qk,\cdots, pk]$.
Using this, we prove:

\begin{lemma} \label{thickening} The thickening $\theta_{y \in Y}$ on $E^{p,q}_{\iota}$
decomposes as a direct sum of morphisms:
\[
E^{p,q}_{\iota}\xrightarrow{\theta_{N^{i}_{C/Y}/N^{i-1}_{C/Y}}}E^{p-1,q+1}_{\iota+r_i}\otimes (N^{i}_{C/Y}/N^{i-1}_{C/Y})^{\vee}
\]
between polystable bundles of the same slope. Here, $r_i$ is the number satisfying $\mu(R_i)=\mu(N^{i}_{C/Y}/N^{i-1}_{C/Y})=r_i \deg \mathcal{L}$.
\end{lemma}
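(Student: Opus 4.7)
The proof proceeds in three main parts: unpacking the splitting coming from (RPC), decomposing both sides into polystable pieces via Lemma~\ref{decomp_lemma}, and then forcing the slope-matching via a polystability argument combined with the self-duality coming from the polarization (equivalently, the $\SL_{2}$-equivariance coming from the Mumford-Tate group of $C$).

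First, I would invoke (RPC). The tangent splitting recorded at the end of Section~\ref{relprop2} dualizes to
\[
\varphi^{*}\Omega^{1}_{Y}(\log S_{Y})\;\cong\;\Omega^{1}_{C}(\log S_{C})\oplus\bigoplus_{i=0}^{s}(N^{i}_{C/Y}/N^{i-1}_{C/Y})^{\vee}.
\]
The pulled-back $Y$-Higgs field $E^{p,q}\to E^{p-1,q+1}\otimes\varphi^{*}\Omega^{1}_{Y}(\log S_{Y})$ then splits as the intrinsic $C$-Higgs field plus the thickening $\theta_{y\in Y}=\bigoplus_{i}\theta_{N^{i}_{C/Y}/N^{i-1}_{C/Y}}$, whose summands are maps
\[
\theta_{N^{i}_{C/Y}/N^{i-1}_{C/Y}}\colon E^{p,q}\longrightarrow E^{p-1,q+1}\otimes(N^{i}_{C/Y}/N^{i-1}_{C/Y})^{\vee}.
\]
This already accounts for the direct-sum structure along the index $i$.

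Next, I would use Lemma~\ref{decomp_lemma} together with the identification $\mathcal{L}^{2}\simeq\Omega^{1}_{C}(\log S_{C})$ to write $E^{p,q}=\bigoplus_{\iota}E^{p,q}_{\iota}$ with $E^{p,q}_{\iota}$ polystable of slope $\iota\deg\mathcal{L}$. By the definition of $r_{i}$ the piece $(N^{i}_{C/Y}/N^{i-1}_{C/Y})^{\vee}$ is polystable of slope $-r_{i}\deg\mathcal{L}$, so
\[
E^{p-1,q+1}\otimes(N^{i}_{C/Y}/N^{i-1}_{C/Y})^{\vee}=\bigoplus_{\iota'}E^{p-1,q+1}_{\iota'}\otimes(N^{i}_{C/Y}/N^{i-1}_{C/Y})^{\vee},
\]
with each summand on the right polystable of slope $(\iota'-r_{i})\deg\mathcal{L}$. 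Restricting $\theta_{N^{i}_{C/Y}/N^{i-1}_{C/Y}}$ to $E^{p,q}_{\iota}$ produces components indexed by $\iota'$, and the lemma reduces to the claim that only the component with $\iota'=\iota+r_{i}$ is nonzero, in which case source and target share the common slope $\iota\deg\mathcal{L}$.

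The main obstacle is establishing this slope-matching. One inequality is immediate: a nonzero morphism between polystable bundles on the smooth projective curve $C$ requires the source slope to be at most the target slope, giving $\iota\deg\mathcal{L}\leq(\iota'-r_{i})\deg\mathcal{L}$, i.e.\ $\iota'\geq\iota+r_{i}$. For the reverse inequality I would use the polarization $Q$ on $\V$: it induces a self-duality of the Higgs data identifying $E^{p,q}$ with $(E^{k-p,k-q})^{\vee}$ and identifying $\theta$ with its $Q$-adjoint, so applying the same polystability principle to the adjoint thickening yields $\iota'\leq\iota+r_{i}$. An equivalent and perhaps more transparent route is to invoke the $\SL_{2}\times U_{1}\times\cdots\times U_{r}$-equivariance of the restricted Higgs data supplied by Lemma~\ref{decomp_lemma}: the piece $(N^{i}_{C/Y}/N^{i-1}_{C/Y})^{\vee}$ carries a definite $\mathcal{L}$-weight equal to $-r_{i}$, and equivariance of the thickening forces its $\mathcal{L}$-weight shift to be precisely $r_{i}$. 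Either route produces $\iota'=\iota+r_{i}$, so every nonzero component of $\theta_{N^{i}_{C/Y}/N^{i-1}_{C/Y}}$ restricted to $E^{p,q}_{\iota}$ lands in $E^{p-1,q+1}_{\iota+r_{i}}\otimes(N^{i}_{C/Y}/N^{i-1}_{C/Y})^{\vee}$, between polystable bundles of the common slope $\iota\deg\mathcal{L}$, which is the assertion of the lemma.
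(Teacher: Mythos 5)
Your proposal takes a genuinely different route from the paper. The paper reduces the statement to the case $i=1$, $k=1$ via the explicit decomposition of $\mathbb{V}_C$ from Lemma~\ref{decomp_lemma}, lists the four possible component maps, and refers to \cite[Lemma 2.7]{mz11} together with the Leibniz rule for $\theta^{\otimes k}$ for general $k$; your proof instead tries to force the target slope directly by a polystability argument combined with duality or equivariance. The two first parts of your proof (the splitting of the thickening along $i$, and the polystable decomposition of source and target) are correct and aligned with the paper's setup, and your polystability inequality ($\iota' \geq \iota + r_i$) is sound.

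There is, however, a real gap in the reverse inequality. The duality argument does not give what you want. The polarization $Q$ restricts to perfect pairings $E^{p,q}_\iota \otimes E^{k-p,k-q}_{-\iota} \to \mathcal{O}$, and $Q$-skew-adjointness of the thickening relates the component $E^{p,q}_\iota \to E^{p-1,q+1}_{\iota'}\otimes(N^i_{C/Y}/N^{i-1}_{C/Y})^\vee$ to the component $E^{k-p+1,k-q-1}_{-\iota'} \to E^{k-p,k-q}_{-\iota}\otimes(N^i_{C/Y}/N^{i-1}_{C/Y})^\vee$. Applying the same polystability principle to the adjoint component gives $-\iota' \leq -\iota - r_i$, which is $\iota' \geq \iota + r_i$ \emph{again} — the same inequality, not the reverse. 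The $Q$-adjoint swaps $(p,q)$ and negates $\iota$ simultaneously, so the two inequalities coincide and the duality yields nothing new.

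The $\SL_2$-equivariance argument is, by contrast, the morally correct one, but it needs justification you do not supply. The subtle point is that $\theta_{y\in Y}$ is not \emph{a priori} a piece of data intrinsic to the Shimura curve $C$: it involves $T_Y$, and $Y$ is not assumed special, so there is no obvious $\SL_2$-action on $Y$. What makes the argument work is that the pulled-back Higgs field $\varphi^*\theta_Y$ is the restriction along $T_{Y^0}\hookrightarrow T_X^{\mathrm{horiz}}$ of the adjoint action of $\mathfrak{g}^{-1,1}$ on $E$, and that adjoint action is $\SL_2$-equivariant because $\SL_2 \hookrightarrow G$ as algebraic groups; under (RPC) the quotient $N^i_{C/Y}/N^{i-1}_{C/Y}$ sits inside the polystable summand $R_i$ of $N_{C/X}$, which is of pure $\mathcal{L}$-weight $r_i$. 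Equivariance of the adjoint action then forces the $\iota$-shift to be exactly $r_i$. This is essentially what \cite[Lemma 2.7]{mz11}, which the paper cites, is packaging. You should replace the duality step with this chain of reasoning, or else supply the explicit case-by-case analysis the paper gives.
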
 

\begin{proof} Note that the above decomposition of $N_{C/Y}$
gives a corresponding decomposition as
\[
\theta_{C}+\theta_{N_{C/Y}}=\theta_{C}+\theta_{N_{C/Y}^{0}}+\theta_{N_{C/Y}^{1}/N_{C/Y}^{0}}+ \cdots +\theta_{N_{C/Y}^{s}/N_{C/Y}^{s-1}}.
\]
Since, by Lemma~ \ref{decomp_lemma} on the curve $C$, one has the $\C$-local system
$\mathbb{V}_{C}=\bigoplus \left(  S^{i}(\mathbb{L})\otimes \mathbb{T}_{i} \right) \bigoplus \mathbb{U}$, the description of the sheaves
$E^{p,q}$ shows that we can reduce to the situation $i=1$.
This case is treated in \cite[Lemma 2.7]{mz11} for $\mathbb{V}_{C}^{\otimes k}$. In fact, if $i=1$, then for $k=1$ we have the decompositions
\[
\mathcal{L}\otimes \mathcal{T}\to \mathcal{L}^{-1}\otimes
\mathcal{T}\otimes \Omega^{1}_{C}(\log S_{C})
\]
\[
\mathcal{L}\otimes \mathcal{T}\to \mathcal{L}^{-1}\otimes
\mathcal{T}\otimes (N^{0}_{C/Y})^{\vee}
\]
\[
\mathcal{L}\otimes \mathcal{T}\to
\mathcal{U}^{\vee}\otimes(N^{1}_{C/Y}/N^{0}_{C/Y})^{\vee}
\]
\[
\mathcal{U}\to \mathcal{U}^{\vee}\otimes
(N^{2}_{C/Y}/N^{1}_{C/Y})^{\vee}
\]
and for arbitrary weight $k$, the result can be obtained by reducing to
the case $k=1$ by remembering that $\theta^{\otimes k}_{y \in Y}$ is
defined by the Leibniz rule.
\end{proof} 

Thus, we have shown that the kernels of $\vartheta$ decompose into vector bundles with vanishing slopes, and hence induce unitary Higgs bundles. 
This is the crucial ingredient for the remaining proof.  

\begin{proposition} Under condition (RPC), the subspaces $W_{y \in Y,\Q}$ and $W_{y \in Y}$ are invariant under the monodromy action of 
$\pi_1(\bigcup_i C_i^0,y) \to G$ and define a unitary local system on each curve $C_i^0$.
If conditions (RPC) and (BIG) both hold, then the subspaces $W_{y \in Y,\Q}$ and $W_{y \in Y}$ are invariant under the monodromy action 
of $\pi_1(Y^0,y) \to G$. 
\end{proposition}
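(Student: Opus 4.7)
The plan is to use the preceding lemma to identify $W_{y\in Y}$ as the fiber at $y$ of a unitary sub-local-system on each special curve $C_i^0$, and then to propagate the resulting invariance from the union of curves to all of $Y^0$ by means of condition (BIG).

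First I would work one curve at a time. Fix a compactified special curve $\varphi_i:C_i\to Y$ and restrict the Higgs bundle $(E,\vartheta)$ attached to $\W$ to $C_i$. Under (RPC), the preceding lemma shows that the thickened Higgs field $\theta_{y\in Y}$ on the pullback $\varphi_i^*E^{p,p}$ decomposes as a direct sum of morphisms between polystable summands of equal slope. Since a morphism between polystable bundles of the same slope has polystable kernel of the same slope, the kernel of $\theta_{y\in Y}$ along $C_i$ is a polystable sub-Higgs-bundle of slope zero with vanishing Higgs field. By the Simpson correspondence (specialized via Narasimhan--Seshadri to the unitary case), such a sub-object corresponds to a unitary sub-local-system $\W'_i\subset\W|_{C_i^0}$, whose fiber at $y$ is by construction $W_{y\in Y}$. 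Rationality of the defining equations (the Higgs decomposition of $\V$ comes from the $G(\R)$-representation and $\W\subset\V^\otimes$ is defined over $\Q$) shows that $\W'_i$ descends to $\Q$, with fiber $W_{y\in Y,\Q}$ at $y$.

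Next I would deduce the first assertion. Since $\W'_i$ is a sub-local-system on $C_i^0$, the monodromy representation $\pi_1(C_i^0,y)\to G$ preserves both $W_{y\in Y,\Q}$ and $W_{y\in Y}$, and its restriction to $\W'_i$ is unitary because the flat connection coincides with the Hermitian connection on the unitary summand. Running over $i\in I$ and using the standing hypothesis that $\bigcup_i C_i^0$ is connected and contains $y$, this gives simultaneous invariance under the image of $\pi_1(\bigcup_i C_i^0,y)$, together with unitarity on each $C_i^0$. For the second assertion, the stabilizer of the $\Q$-subspace $W_{y\in Y,\Q}\subset W_{y,\Q}$ in $G$ is a $\Q$-algebraic subgroup (defined by linear equations over $\Q$); by what was just shown, it contains the image of $\pi_1(\bigcup_i C_i^0,y)$, hence its $\Q$-Zariski closure in $G$. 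Condition (BIG) identifies this closure with the $\Q$-Zariski closure of $\pi_1(Y^0,y)$, which therefore also stabilizes $W_{y\in Y,\Q}$ and $W_{y\in Y}$.

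The main obstacle I anticipate is the first step, where one must upgrade the slope identities of the decomposition lemma to an actual polystable sub-Higgs-bundle with vanishing Higgs field, and then verify that the resulting unitary summand has a $\Q$-structure matching $W_{y\in Y,\Q}$ on the nose. The polystability of kernels of slope-preserving morphisms between polystable bundles is standard, but the rationality is more delicate: the decomposition of Lemma~\ref{decomp_lemma} is a priori only defined over $\R$ (it exploits that certain factors of the Mumford--Tate group of $C^0$ are anisotropic), so one has to argue that the sub-local-system extracted from $\ker(\theta_{y\in Y})$ genuinely descends to $\Q$. Without this descent, condition (BIG) -- a statement about $\Q$-Zariski closures -- cannot be applied in the final step.
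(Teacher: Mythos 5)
Your proposal is essentially the paper's own proof: the paper simply defers to Propositions~2.4, 3.1, and 3.3 of \cite{mz11}, which proceed exactly along the lines you describe — identify the kernel of the thickened Higgs field as a slope-zero polystable sub-Higgs bundle with vanishing Higgs field (using the preceding lemma and (RPC)), invoke Simpson/Narasimhan--Seshadri to obtain a unitary flat sub-local-system with fiber $W_{y\in Y}$ at $y$, and then use the $\Q$-Zariski closure argument together with (BIG) to pass from $\pi_1(\bigcup_i C_i^0,y)$ to $\pi_1(Y^0,y)$.

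The rationality worry you flag at the end dissolves without further work, and it is worth seeing why so you do not spend effort on it. You do not need the sub-local-system $\W'_i\subset\W|_{C_i^0}$ to descend to $\Q$. The subspace $W_{y\in Y,\Q}$ is the intersection $W_{y\in Y}\cap W_{y,\Q}$ of the kernel fiber with the $\Q$-structure of the stalk of $\W$ at $y$, and each factor is separately preserved by the monodromy: the first because you have exhibited $W_{y\in Y}$ as the fiber of a flat sub-bundle, and the second because $\W$ is a $\Q$-local system. A linear automorphism preserving two subspaces preserves their intersection, so $W_{y\in Y,\Q}$ is automatically invariant. Consequently the fact that the decomposition in Lemma~\ref{decomp_lemma} is a priori only available over $\R$ (or $\C$) — it is indeed used only as a tool to control slopes and establish flatness of the kernel — is irrelevant to the rationality of the final conclusion.
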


\begin{proof} The proofs of Prop. 2.4 and Prop. 3.1 of \cite{mz11} immediately
carry over to this more general situation. Indeed the kernels $ker (\theta^{p,p})$ decompose to subbundles with non-positive
slopes and one shows that $ker (\theta^{p,p})_0$, the subbundle of slope 0, is invariant under complex conjugation induced by the real
structure and underlies a unipotent subsystem $\W$ of type $(p,p)$. Conversely, any $t\in W_{y\in Y, \R}$ lies in  
$ker (\theta^{p,p})_{0,y}$ showing that $\W_{\R,y}=W_{y\in Y, \R}$. Proposition 3.1 of \cite{mz11} says that the subspace
$W_{y\in Y, \R}$ is invariant under monodromy action. Fixing a base point $y_1\in C^{0}_1$ the idea is to study the parallel transport of real
vectors in $W_{y_1\in Y, \R}$ along paths in the connected subspace $\bigcup_i C_i$. The Higgs field $\theta_{y\in Y}$ decomposes as
$\theta_{y\in C_1}+\theta_{N_{C_1/Y},y}$ and hence $W_{y\in Y, \R}$ can be identified with the kernel of $\theta_{N_{C_1/Y},y}$.
On the other hand, as indicated above, there is a unitary subsystem $\W_{C_1}$ of type $(p,p)$ such that $W_{y\in Y, \R}=\W_{C_1,y}$.
The kernel $\theta_{N_{C_1/Y}}$ is a polystable subbundle of slope zero and hence it underlies a unitary subsystem $\W^{\prime}_{C_1}$. 
It follows that $W_{y\in Y, \R}\subset \W^{\prime}_{C_1,y}$. Now starting with a real vector $t_1\in W_{y\in Y, \R}$ and parallel transporting
it along some path in $C^{0}_1$ from $y_1$ to $y_2\in C^{0}_1\cap C^{0}_2$ to a vector $t_2$, one sees that $t_2$ is also a real vector and 
is contained in the fiber of $\W^{\prime}_{C_1}$ at $y_2$. It therefore follows that $\theta_{y\in C_1}(t_2)=0$ and $\theta_{N_{C_1/Y},y}(t_2)=0$, i.e.,
$t_2\in W_{y_2\in Y, \R}$. Regarding $y_2\in C^{0}_2$ and repeating the above parallel transport along a path in $C^{0}_2$, from 
$y_2$ to $y_3\in C_2\cap C_3$ and so on, one concludes that $W_{y\in Y, \R}$ is invariant under the monodromy action.\\
Finally, the last part of the proof there uses only condition (BIG).
\end{proof}

As in Cor. 3.5 of \cite{mz11}, one gets the following corollary: 

\begin{corollary}
The subspaces $W_{y \in Y}$ define a unitary local subsystem $\U \subset \W$ on $Y^0$ with $\Q$-structure. 
The local system $\U$ extends to $Y$, and has finite monodromy.
\end{corollary}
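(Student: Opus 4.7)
The plan is to string together the three ingredients sitting immediately above the corollary: the Proposition just proved (invariance of $W_{y \in Y,\Q}$ under $\pi_1(Y^0,y)$), the Lemma showing that (RPC) forces the thickening $\theta_{y \in Y}$ to split into morphisms between polystable bundles of equal slope, and Simpson's correspondence relating polystable slope-zero Higgs bundles with vanishing Higgs field to unitary local systems. First, I would package the Proposition as the statement that the $\Q$-subspaces $W_{y \in Y,\Q}$ glue to a $\Q$-local subsystem $\U_\Q \subset \W_\Q$ on $Y^0$, since a $\pi_1(Y^0,y)$-invariant subspace of a fiber is precisely a sub-local-system. Extending scalars to $\C$ yields the desired $\U \subset \W$ with its built-in $\Q$-structure.

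Second, I would upgrade $\U$ to a unitary local system on all of $Y^0$. By the Lemma just above, under (RPC) the thickening $\theta_{y \in Y}$ decomposes as a direct sum of morphisms between polystable Higgs sheaves of equal slope; hence the fiberwise kernels $W_{y \in Y}$ assemble into a saturated Higgs subbundle $(F,0) \subset (E,\vartheta)$ which is polystable of slope zero with vanishing Higgs field. Simpson's correspondence then identifies $(F,0)$ with a unitary local system, and by construction the latter is $\U$.

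Third, I would extract finite monodromy and extension across $S_Y$ in one stroke. Since $\V$ arises from a representation of the arithmetic group $\Gamma$, it carries a $\pi_1(Y^0,y)$-invariant $\Z$-lattice inside $\W_\Q$; intersecting with $\U_\Q$ produces an invariant $\Z$-lattice in $\U_\Q$. The monodromy image of $\U$ therefore lies in $GL_n(\Z) \cap U(n)$, a discrete subgroup of a compact group, and so is finite. Finally, by the standing hypothesis that $Y^0$ has unipotent local monodromies at infinity, the local monodromies of $\U$ around $S_Y$ are simultaneously unipotent and of finite order, hence trivial; this forces $\U$ to extend as a local system from $Y^0$ to all of $Y$.

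The main obstacle I expect is the second step: verifying that the fiberwise kernels $W_{y \in Y}$ genuinely vary holomorphically as a saturated Higgs subbundle of $(E,\vartheta)$, rather than remaining a purely pointwise construction. This relies on the orthogonal direct-sum decomposition of $\varphi^*T_X(-\log S)$ along each special curve $C_i$ supplied by (RPC) in Section~\ref{relprop2}, spread across $Y^0$ using the $\pi_1(Y^0,y)$-invariance furnished by (BIG). Once this subbundle is in place, the finite monodromy and extension statements follow from standard arguments, and the $\Q$-structure is tautological from the construction.
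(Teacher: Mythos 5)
Your proof matches the approach the paper takes (by reference to Cor.~3.5 of \cite{mz11} together with the preceding lemma and proposition): monodromy invariance of $W_{y\in Y,\Q}$ gives the $\Q$-local subsystem, the slope-zero kernel of the thickening gives unitarity via Simpson's correspondence, and a $\pi_1$-invariant $\Z$-lattice inside $\U_\Q$ together with the unitary structure gives finite monodromy, whence the unipotent boundary monodromies are of finite order, hence trivial, and $\U$ extends across $S_Y$. You have also correctly isolated the one genuine technical point---that the fiberwise kernels $W_{y\in Y}$ assemble into an actual Higgs subbundle of $(E,\vartheta)$ rather than a pointwise object---which is precisely what the lemma decomposing the thickening into morphisms of polystable pieces of equal slope is there to guarantee.
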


Since we assumed that the monodromies at infinity are unipotent, which always holds after a finite \'etale cover of $Y^0$, 
this means that $\U$ is trivial, and all its global sections, i.e., all $(p,p$)-classes inside $\U$, 
which are by definition of $W_{y \in Y}$ invariant under $H$, are also monodromy-invariant. 
Recall that the Mumford-Tate group $MT(\V_{Y^0})$ is the $\Q$-algebraic group 
fixing all parallel Hodge classes for all $p$ \cite[Chap. 15]{cmp}. We obtain therefore:

\begin{corollary}
The infinitesimally fixed Hodge classes in $\W_\Q$ over points $y \in Y^0$ are globally monodromy-invariant. 
Hence, condition (Mon) holds. 
\end{corollary}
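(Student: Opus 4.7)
The plan is to read condition (M-T) directly off the preceding corollary, by upgrading ``finite monodromy'' to ``trivial monodromy'' via the unipotency hypothesis, and then converting this into an equality of invariant tensors between $H_y$ and $MT(\V_{Y^0})_y$.

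First, fix a finitely generated irreducible $\Q$-sub-local-system $\W \subset \V^\otimes$ of some even weight $2p$ and a base point $y \in Y^0 \cap \bigcup_i C_i^0$. The previous corollary, valid under (BIG) and (RPC), shows that the infinitesimally parallel $(p,p)$-classes $W_{y \in Y,\Q} \subset E_y^{p,p}$, i.e.\ the elements killed by the thickened Higgs field $\theta_{y \in Y}$, assemble into a $\Q$-unitary local subsystem $\U \subset \W$ on $Y^0$ which extends to $Y$ with finite monodromy.

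Next, I would invoke the standing assumption that $\V$, and hence every tensor subquotient, has unipotent monodromies at infinity. The local monodromies of $\U$ around $S_Y$ are therefore simultaneously unipotent and of finite order, hence trivial. It follows that $\U$ is a trivial local system on $Y^0$, and every element of $W_{y \in Y,\Q}$ equals the value at $y$ of a globally flat $\Q$-rational $(p,p)$-section of $\W$, that is, of a parallel Hodge class over $Y^0$.

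Finally I would compare $H_y$ and $MT(\V_{Y^0})_y$. The inclusion $H_y \subset MT(\V_{Y^0})_y$ is automatic, since parallel Hodge classes are a fortiori infinitesimally parallel (the Higgs field kills flat sections). The previous paragraph supplies the matching converse: every infinitesimally parallel Hodge class on $\W$ at $y$ extends to a parallel Hodge class on $Y^0$, and is therefore fixed by $MT(\V_{Y^0})_y$. Letting $\W$ range over all finitely generated irreducible $\Q$-subsystems of $\V^\otimes$ and over all admissible bidegrees, the invariant tensors of $H_y$ and of $MT(\V_{Y^0})_y$ coincide in every tensor construction on $\V_y$, so a Tannakian reconstruction forces $H_y \sim MT(\V_{Y^0})_y$ up to connected components and derived subgroup, which is exactly condition (M-T). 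The main technical obstacle lies in the finite-plus-unipotent-implies-trivial step: one must verify that unipotency of the monodromy of $\V$ at infinity genuinely descends to the auxiliary subsystem $\U \subset \W$, but this is immediate from stability of unipotent operators under tensor products, duals, and $\Q$-subquotients, so once recorded carefully the Tannakian comparison is routine.
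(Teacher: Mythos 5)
Your argument is correct and follows the paper's own proof essentially verbatim: the preceding corollary gives the unitary $\Q$-local system $\U$ with finite monodromy, unipotency at infinity forces $\U$ to be trivial, so the infinitesimally parallel Hodge classes are values of globally flat sections and hence fixed by $MT(\V_{Y^0})_y$, yielding (M-T). Your closing Tannakian comparison merely makes explicit what the paper leaves implicit in the tensor-characterizations of $H_y$ and $MT(\V_{Y^0})_y$, so there is no substantive difference.
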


Therefore, Theorem~\ref{Theorem3} is proven.


\begin{thebibliography}{XXX}
\bibitem{Abdul} S. Abdulali, Conjugates of strongly equivariant maps, Pacific J. Math. 165 (1994), 207-216.
\bibitem{AGHM}  F. Andreatta,  E, Z.Goren,  B. Howard,  K. Madapusi Pera:
Faltings heights of abelian varieties with complex multiplication, To appear in Ann. of Math. 
\bibitem{amrt} A. Ash, D. Mumford, M. Rapoport, Y.-S. Tai: Smooth compactifications of locally symmetric varieties, 
second edition, Cambridge Univ. Press (2010).
\bibitem{cmp} J. Carlson, S. M\"uller-Stach, C. Peters: Periods maps and period domains, Cambridge Studies in Adv. Math. Vol. 85, 
second edition (!), Cambridge University Press (in preparation).
\bibitem{deligne}  P. Deligne: Vari\'et\'es de Shimura: interpr\'etation modulaire, et techniques de 
construction de mod\`eles canoniques, Proceedings of Symposia in Pure Mathematics Vol. 33, part 2, 247-290 (1979).
\bibitem{edixhoven-yafaev} B. Edixhoven, A. Yafaev: Subvarieties of Shimura varieties, Ann. of Math., Vol. 157, Nr. 2, 621-645 (2003).  
\bibitem{ggk} M. Green, Ph. Griffiths, M. Kerr: Mumford-Tate Groups and Domains:
Their Geometry and Arithmetic, Annals of Math. Studies 183, Princeton Univ. Press (2012). 
\bibitem{grt} Ph. Griffiths, C. Robles, D. Toledo: Quotients of non-classical flag domains are not algebraic, Journal of Alg. Geom. 1(1), 1-13 (2014). 
\bibitem{klingler-yafaev} B. Klingler, A. Yafaev: The Andr\'e-Oort conjecture, Ann. of Math., Vol. 180, Nr. 3, 867-925 (2014).
\bibitem{moonen} B. Moonen: Models of Shimura varieties in mixed characteristics, in: Galois representations in arithmetic 
algebraic geometry, LMS Lecture Notes Series Vol. 254, Cambridge University Press, 267-350 (1998).
\bibitem{mvz09} S. M\"uller-Stach, E. Viehweg, K. Zuo: Relative Proportionality for subvarieties of moduli spaces of 
K3 and abelian surfaces, Pure and Applied Mathematics Quarterly, Vol. 5, Nr. 3, 1161-1199 (2009). 
\bibitem{mz11} S. M\"uller-Stach, K. Zuo: A characterization of special subvarieties in orthogonal Shimura varieties, 
Pure and Applied Mathematics Quarterly, Vol. 7, Nr. 4, 1599-1630 (2011).
\bibitem{pila} J. Pila: O-minimality and the Andr\'e-Oort conjecture for $\C^n$, Ann. of Math., Vol. 173, Nr. 3, 1779-1840 (2011). 
\bibitem{ullmo-yafaev} E. Ullmo, A. Yafaev: Galois orbits and equidistribution of special subvarieties: towards the Andr\'e-Oort conjecture, 
Ann. of Math., Vol. 180, Nr. 3, 823-865 (2014). 
\bibitem{simpson} C. Simpson: Higgs bundles and local systems, Publ. Math. IHES vol. 75, 5-95 (1992)
\bibitem{tsimerman} J. Tsimerman: A proof of the Andr\'e-Oort conjecture for $A_g$, arXiv:1506.01466v1. 
\bibitem{viehweg-zuo} E. Viehweg, K. Zuo: Families with a strictly maximal Higgs
field, Asian J. of Math. 7,  575-598 (2003).
\bibitem{VZ04} E. Viehweg, K. Zuo, A characterization of certain Shimura curves in the moduli stack of abelian varieties. J. Diff Geom. 66
(2004), 233-287.
\bibitem{zhang} X. Yuan, S.-W. Zhang: On the averaged Colmez conjecture, see arXiv:1507.06903 (2015).
\end{thebibliography}
\end{document}